\begin{document}

	\title{Koszulness and supersolvability for Dirichlet arrangements}

\author{Bob Lutz}
\address{Department of Mathematics, University of Michigan, Ann Arbor, MI, USA}
\email{boblutz@umich.edu}
\thanks{Work of the author was partially supported by NSF grants DMS-1401224 and DMS-1701576.}

\date{\today}

\subjclass[2010]{52C35 (Primary) 05B35, 16S37 (Secondary)}

\begin{abstract}
	We prove that the cone over a Dirichlet arrangement is supersolvable if and only if its Orlik-Solomon algebra is Koszul. This was previously shown for four other classes of arrangements. We exhibit an infinite family of cones over Dirichlet arrangements that are combinatorially distinct from these other four classes.
\end{abstract}

\maketitle

\section{Introduction}

A \emph{Koszul algebra} is a graded algebra that is ``as close to semisimple as it can possibly be'' \cite[p. 480]{beilinson1996}. Koszul algebras play an important role in the topology of complex hyperplane arrangements. For example, if $\AA$ is such an arrangement and $U$ its complement, then the Orlik-Solomon algebra $\os(\AA)$ is Koszul if and only if $U$ is a rational $K(\pi,1)$-space. Also if $\os(\AA)$ is Koszul and $G_1\triangleright G_2 \triangleright\cdots$ denotes the lower central series of the fundamental group $\pi_1(U)$, defined by $G_1=\pi_1(U)$ and $G_{n+1} = [G_n,G_1]$, then the celebrated \emph{Lower Central Series Formula} holds:
\begin{equation}
\prod_{k=1}^\infty (1-t^k)^{\phi_k} = P(U,-t),
\label{eq:lcsformula}
\end{equation}
where $P(U,t)$ is the Poincar\'{e} polynomial of $U$ and $\phi_k = \rk (G_k/G_{k+1})$.

It is natural to seek a combinatorial characterization of the arrangements $\AA$ for which $\os(\AA)$ is Koszul. Shelton and Yuzvinsky \cite[Theorem 4.6]{shelton1997} showed that if $\AA$ is supersolvable, then $\os(\AA)$ is Koszul. Whether the converse holds is unknown.

\begin{q}
	If the Orlik-Solomon algebra of a central hyperplane arrangement $\AA$ is Koszul, then is $\AA$ supersolvable?
	\label{q:koszul}
\end{q}

 We answer this question affirmatively for cones (or centralizations) over \emph{Dirichlet arrangements}, a generalization of graphic arrangements arising from electrical networks and order polytopes of finite posets \cite{lutz2017,lutz2018mat}.

\begin{thm}
	The cone over a Dirichlet arrangement is supersolvable if and only if its Orlik-Solomon algebra is Koszul.
	\label{thm:intro}
\end{thm}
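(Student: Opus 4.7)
The forward implication is immediate from the Shelton--Yuzvinsky theorem recalled in the introduction, applied to $c\AA$. The work is entirely in the converse: assuming $\os(c\AA)$ is Koszul, deduce that $c\AA$ is supersolvable. My plan is to transfer the question to the combinatorial data defining $\AA$ --- a finite graph $G$ with a distinguished set of boundary vertices $\partial$ --- and to isolate a condition on $(G,\partial)$ that is simultaneously forced by Koszulness and sufficient for supersolvability.

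The guiding analogy is the boundary-free case $\partial = \emptyset$, in which Dirichlet arrangements are just graphic arrangements: there Stanley's theorem identifies supersolvability of $c\AA$ with chordality of $G$, and Fr\"oberg's theorem (in its Orlik--Solomon reformulation) identifies Koszulness of $\os(c\AA)$ with the very same condition. Guided by this, I would formulate a ``boundary chordality'' condition on $(G,\partial)$ and attack the theorem in two steps. First, show that boundary chordality implies supersolvability of $c\AA$ by constructing a maximal chain of modular coatoms in the intersection lattice of $c\AA$; concretely, iteratively delete vertices of $G$ that are simplicial in an appropriate boundary-sensitive sense and verify that each step corresponds to a modular coatom. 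Second, show that failure of boundary chordality produces a flat $X$ whose localized Orlik--Solomon algebra $\os(c\AA)_{\le X}$ is not Koszul --- yielding a contradiction, because Koszulness of Orlik--Solomon algebras is inherited by localizations.

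The central difficulty lies in the second step: pinning down a manageable list of minimal obstructions to boundary chordality and verifying non-Koszulness of each. In the graphic case these obstructions are exactly the cycles of length $\ge 4$, and non-Koszulness is transparent from the OS Hilbert series or from the failure of the Lower Central Series Formula \eqref{eq:lcsformula}. In the Dirichlet setting the classification is more delicate, since boundary vertices can ``shortcut'' cycles and create new minimal configurations with no analogue in the graphic world. The expected crux is therefore a combinatorial classification of these boundary-augmented non-chordal structures, followed by a direct verification --- via a Hilbert series mismatch, a nonvanishing linear-strand Betti number $\beta_{i,j}$ with $j \ne i$, or an explicit non-quadratic syzygy --- that each has a non-Koszul OS algebra. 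Once this catalogue is in hand, the inductive construction of the modular coatom chain should go through without essential trouble.
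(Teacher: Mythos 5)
Your forward direction (Shelton--Yuzvinsky applied to the cone) matches the paper exactly. Your converse is a plausible research plan, but it leaves unresolved exactly the technical work the paper actually carries out, and it routes that work through an unnecessary detour. Three specific points.

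First, the ``boundary chordality'' condition you gesture at is already known and is not something you would need to discover: it is chordality of $\mg$, the graph obtained from $\g$ by adding a clique on the boundary nodes, and the equivalence of this with supersolvability of $\AA(\g,\u)$ is Theorem~1.2 of \cite{lutz2017} (Proposition~\ref{prop:2conn} here). So your Step~1 --- building a modular coatom chain --- reproves a result the paper simply cites.

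Second, your Step~2 is more elaborate than needed. You propose to localize at a flat and then show the localized OS algebra is non-Koszul via Hilbert series, LCS, or Betti numbers, invoking inheritance of Koszulness under localization. The paper instead uses the elementary chain Koszul $\Rightarrow$ quadratic (Proposition~\ref{prop:kosquad}) and then shows directly that if $\mg$ is not chordal then the OS ideal has a minimal generator of degree $\ge 3$, with no localization required. The ``explicit non-quadratic syzygy'' option you mention in passing is in fact the right and simplest one, and it should be pursued at the level of the full ideal, not a localized one.

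Third --- and this is the genuine gap --- the ``central difficulty'' you correctly identify (classifying the boundary-augmented obstructions) is precisely where the paper's content lies, and your proposal does not resolve it. The essential input is the classification of circuits of the Dirichlet matroid into three types (Proposition~\ref{prop:circuits}, from \cite{lutz2018mat}): (A) a crossing together with the cone hyperplane, (B) a cycle of $\g$ through at most one boundary node, and (C) a minimal acyclic set containing two distinct crossings. Type (C) circuits can be large even when $\mg$ is chordal, so a naive count of ``long chordless circuits'' would give false positives. The new technical lemmas (Lemmas~\ref{lem:osy} and~\ref{lem:os2cross}) show that $\dif(C)$ for any type~(C) circuit already lies in the ideal generated by the $\dif(\eh X_i)$ over the crossings $X_i \subseteq C$, so type~(C) circuits never contribute minimal generators. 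With that in hand, the minimal generators of $I$ are exactly $\dif(C)$ for chordless circuits of types (A) and (B) (Proposition~\ref{prop:mingen}), and such circuits of size $\ge 4$ correspond bijectively to chordless cycles of length $\ge 4$ in $\mg$ (Proposition~\ref{prop:chordless}). This is the concrete content your outline defers, and without it the argument does not close.
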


Question \ref{q:koszul} has been answered affirmatively for other classes of arrangements, including graphic arrangements \cite{hultman2016,jambu1998, schenck2002, vanle2013}. Our next theorem shows that Theorem \ref{thm:intro} properly extends all previous results. We say that two central arrangements are \emph{combinatorially equivalent} if the underlying matroids are isomorphic.

\begin{thm}
	There are infinitely many cones over Dirichlet arrangements that are not combinatorially equivalent to any arrangement for which Question \ref{q:koszul} has been previously answered.
	\label{thm:intro2}
\end{thm}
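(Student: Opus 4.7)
The plan is to exhibit an explicit infinite family $\{\AA_n\}_{n \geq n_0}$ of Dirichlet arrangements whose cones $c\AA_n$ are supersolvable---hence Koszul by Theorem~\ref{thm:intro}---while being combinatorially distinct from every arrangement in each of the four previously settled classes. Producing such a family simultaneously demonstrates that Theorem~\ref{thm:intro} is not subsumed by any earlier result.

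First, I would identify a small ``seed'' Dirichlet arrangement $\AA$ whose cone $c\AA$ is supersolvable but whose underlying matroid fails to be graphic (for instance, by exhibiting a $U_{2,4}$-minor, which Tutte's characterization forbids from any graphic matroid), and that simultaneously fails the combinatorial criterion identifying each of the three remaining classes. The flexibility in the definition of Dirichlet arrangements is the key resource here: an interior vertex carrying several boundary edges with distinct Dirichlet data produces a projective pencil of parallel hyperplanes in $c\AA$, a configuration unavailable to graphic arrangements and often violating the other structural restrictions as well.

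Second, I would enlarge the seed into an infinite family by iteratively attaching additional vertices or edges whose effect on the cone is to adjoin a new modular coatom to the intersection lattice. Such attachments preserve supersolvability, preserve the forbidden-minor obstruction inherited from the seed, and strictly increase the number of hyperplanes, which gives pairwise non-isomorphic matroids. Pairwise distinctness within the family is then immediate from invariants such as rank, number of hyperplanes, or the characteristic polynomial.

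The main obstacle is the construction and verification of the seed. Each of the four previously handled classes is specified by a structural rather than a purely numerical criterion, so one must understand the four criteria well enough to construct a single supersolvable cone over a Dirichlet arrangement that falsifies all of them at once. Once this seed is in hand, extending to an infinite family and distinguishing its members from one another is largely a matter of routine bookkeeping.
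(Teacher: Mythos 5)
The plan is unworkable as stated because of a basic logical conflict with the hypersolvable class. You insist that the seed cone $c\AA$ be supersolvable so that Theorem~\ref{thm:intro} gives Koszulness for free, but supersolvable arrangements are in particular hypersolvable (this is precisely the content of the Jambu--Papadima generalization cited in \cite{jambu1998}), so any supersolvable $c\AA$ already lies in class (iii). An arrangement in class (iii) is trivially combinatorially equivalent to itself, an arrangement for which Question~\ref{q:koszul} was previously answered, so no supersolvable cone can ever satisfy the conclusion of Theorem~\ref{thm:intro2}. The paper's examples are deliberately \emph{not} supersolvable: requiring $\g\setminus\B$ to contain $W_5$ as an induced subgraph forces $\mg$ to be non-chordal, hence $\AA(\g,\u)$ is non-supersolvable by Proposition~\ref{prop:2conn}. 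The paper then rules out hypersolvability by showing that hypersolvability of $\AA(\g,\u)$ would force $\mg$ to be a hypersolvable graph (Proposition~\ref{prop:hypersolvable}), that hypersolvability of graphs passes to induced subgraphs (Proposition~\ref{prop:induced}), and that $W_5$ is not hypersolvable. Nothing analogous appears in your plan.

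There are two further gaps. First, a $U_{2,4}$-restriction does rule out graphic matroids (which are binary), but it does \emph{not} rule out ideal arrangements: the matroid of $\BB_n$ already contains $U_{2,4}$-flats (e.g.\ the four hyperplanes $x_1=x_2$, $x_1=-x_2$, $x_1=0$, $x_2=0$), and ideal arrangements live inside Coxeter arrangements of type $\mathsf{B}/\mathsf{C}$. The paper instead exploits non-representability of $M(\AA(\g,\u))$ over small fields when $\chi(\g,\B)\geq 4$, together with the cardinality bound $|E|\geq 240$ for the exceptional types, to exclude all Coxeter subarrangements at once. Second, you defer the verification that the seed ``simultaneously fails the combinatorial criterion identifying each of the three remaining classes,'' and for class (iv) you offer nothing; the paper handles this with the concrete observation (Proposition~\ref{prop:dmbc}) that an interior vertex adjacent to three boundary nodes produces two size-$3$ circuits through $\eh$ sharing an edge, so the minimal broken circuits cannot be disjoint under any ordering. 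Your proposal is thus a sketch of a strategy whose central requirement (supersolvability) is self-defeating, and whose remaining steps are left unaddressed.
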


Dirichlet arrangements have also been called \emph{$\psi$-graphical arrangements} \cite{mu2015, stanley2015, suyama2018}. It was conjectured in \cite{mu2015} and proven in \cite{suyama2018} that the cone over a Dirichlet arrangement is supersolvable if and only if it is free (see also \cite{lutz2017}).

\section{Background}
\label{sec:bg}

\subsection{Dirichlet arrangements and supersolvability}

Let $\g=(V,E)$ be a finite connected undirected graph with no loops or multiple edges. Let $\B\subseteq V$ be a set of $\geq 2$ vertices inducing an edgless subgraph. We refer to the elements of $\B$ as \emph{boundary nodes}. Let $\be\subseteq E$ be the set of edges meeting $\B$. Let $\KK$ be a field of characteristic 0, and let $\u:\B\to \KK$ be injective.

\begin{mydef}
	The \emph{Dirichlet arrangement} $\overline{\AA}(\g,\u)$ is the arrangement in $\KK^{V\setminus \B}$ of hyperplanes given by
\begin{equation}
\{x_i = x_j:ij\in E\setminus \be\}\cup\{ x_i = \u(j):ij\in \be\mbox{ with }j\in \B\}.
\label{eq:intarr}
\end{equation}
\label{mydef:arr}
\end{mydef}

\begin{eg}[Wheatstone bridge]
	Consider the graph $\g$ on the left side of Figure \ref{fig:dir} with $V=\{i_1,i_2,j_1,j_2\}$, where the boundary nodes $j_1$ and $j_2$ are marked by white circles. Set $\KK=\R$, and let $ u(j_1)=1$ and $ u(j_2)=-1$. The Dirichlet arrangement $\overline{\AA}(\g,u)$ consists of the 5 hyperplanes $x_{i_1}=x_{i_2}$, $x_{i_1} = \pm1$ and $x_{i_2}=\pm 1$. This arrangement is illustrated on the right side of Figure \ref{fig:dir}.  
	
	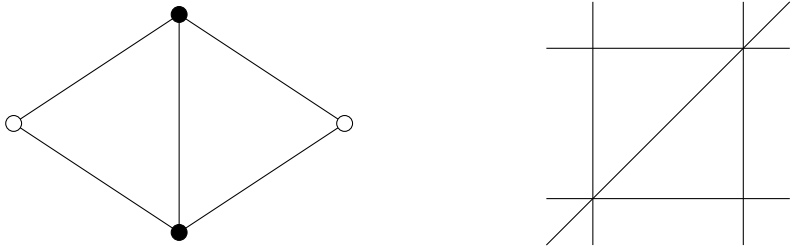
\begin{figure}[ht]
		\centering
		\begin{tikzpicture}[xscale=1,yscale=1]
\coordinate (a) at (-2.2,0);
\coordinate (b) at (0,1.45);
\coordinate (c) at (0,-1.45);
\coordinate (d) at (2.2,0);
\draw (a) -- (b) -- (d) -- (c) -- (a);
\draw (b) -- (c);
\draw[fill=white] (a) circle (3pt);
\draw[fill=black] (b) circle (3pt);
\draw[fill=black] (c) circle (3pt);
\draw[fill=white] (d) circle (3pt);

\def\a{6.5}
\draw ({\a-1},-1.618) -- ({\a-1},1.618);
\draw ({\a+1},-1.618) -- ({\a+1},1.618);
\draw ({\a-1.618},-1) -- ({\a+1.618},-1);
\draw ({\a-1.618},1) -- ({\a+1.618},1);
\draw ({\a-1.618},-1.618) -- ({\a+1.618},1.618);
\end{tikzpicture}
		\caption{A graph with boundary nodes marked in white, left, and a corresponding Dirichlet arrangement, right.}
		\label{fig:dir}
	\end{figure}
	
	\label{eg:dir}
\end{eg}

The arrangement $\overline{\AA}(\g,\u)$ is not central, i.e., the intersection of its elements is empty. We prefer to work with a centralized version of $\AA(\g,\u)$ with essentially the same combinatorics. If $\AA$ is an arrangement in $\KK^n$ defined by equations $f_i(x)=\alpha_i$ for homogenous functions $f_i$ and scalars $\alpha_i$, then the \emph{cone} over $\AA$ is the arrangement in $\KK^{n+1}$ defined by $f_i(x) = \alpha_ix_0$ for all $i$ and $x_0 = 0$, where $x_0$ is a new variable. The cone over any arrangement is central.

\begin{mydef}
	Let $\AA(\g,\u)$ denote the cone over the Dirichlet arrangement $\overline{\AA}(\g,\u)$.
\end{mydef}

Recall that the \emph{intersection lattice} of a central arrangement $\AA$ is the geometric lattice $L(\AA)$ of intersections of elements of $\AA$, ordered by reverse inclusion and graded by codimension.

\begin{mydef}
	A central arrangement $\AA$ is \emph{supersolvable} if 
	the intersection lattice $L(\AA)$ admits a maximal chain of elements $X$ satisfying
	\[\rk(X)+\rk(Y) = \rk(X\wedge Y)+\rk(X\vee Y)\]
	for every $Y\in L(\AA)$.
\end{mydef}

The graph $\g$ is \emph{chordal} if for any cycle $Z$ of length $\geq 4$ there is an edge of $\g\setminus Z$ with both endpoints in $Z$. Stanley \cite[Proposition 2.8]{stanley1972} proved that the graphic arrangement $\AA(\g)$ is supersolvable if and only if $\g$ is chordal. We have the following generalization for Dirichlet arrangements.

\begin{prop}[{\cite[Theorem 1.2]{lutz2017}}]
	Let $\mg$ be the graph obtained from $\g$ by adding edges between every pair of boundary nodes. The arrangement $\AA(\g,\u)$ is supersolvable if and only if $\mg$ is chordal. 
	\label{prop:2conn}
\end{prop}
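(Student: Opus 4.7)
My plan is to translate supersolvability of $\AA(\g,\u)$ into chordality of $\mg$ via a combinatorial dictionary between flats of $\AA(\g,\u)$ and admissible partitions of the augmented vertex set $V\cup\{0\}$, adapting Stanley's proof for graphic arrangements \cite{stanley1972}. A key combinatorial observation that drives the argument is that any induced cycle of $\mg$ of length $m \geq 4$ contains at most two boundary vertices and, if two, they are adjacent in the cycle: otherwise the boundary clique of $\mg$ would immediately supply a chord. Consequently such a cycle either lies entirely in $\g$ (with at most one boundary vertex) or uses exactly one edge added in passing from $\g$ to $\mg$.

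I would begin by describing $L(\AA(\g,\u))$ combinatorially. Introduce a phantom index $0$ and formally set $x_j := \u(j)\,x_0$ for each $j \in \B$, so that every hyperplane has the form $x_a = x_b$ with $a,b \in V\cup\{0\}$ (the cone hyperplane $x_0=0$ being the case $b=0$). Each flat $X$ then corresponds to a partition of $V\cup\{0\}$ recording coincidences of the $x_a$ on $X$. The admissible partitions are those whose blocks are connected in $\mg$ with $0$ adjoined as a neighbor of every boundary node, subject to the compatibility rule that any block containing two distinct boundary nodes must also contain $0$. The compatibility rule encodes the injectivity of $\u$, which forces $\u(j_1)x_0 = \u(j_2)x_0 \Rightarrow x_0 = 0$. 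Under this dictionary, rank and lattice operations match those of admissible partitions.

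For the $(\Leftarrow)$ direction, assume $\mg$ is chordal. Because $\B$ is a clique in $\mg$, I can choose a perfect elimination ordering with the boundary vertices last. Following this ordering I would build a maximal chain of modular flats of $L(\AA(\g,\u))$ by successively absorbing each $v_k$ into the block containing its already-eliminated neighbors. Modularity at each step follows because those neighbors form a clique, via Stanley's semi-modular rank computation. When the first two boundary vertices are absorbed into a common block, the compatibility rule forces $0$ into the block, which corresponds to the cone hyperplane $x_0=0$ entering the modular chain at exactly the right rank; subsequent absorptions proceed without issue since $\B\cup\{0\}$ is then fully merged.

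For the $(\Rightarrow)$ direction, I would argue contrapositively. If $\mg$ has an induced cycle $C=v_1\cdots v_m v_1$ of length $m\geq 4$, the observation above restricts $C$ to one of three forms, and in each I would produce a flat $X$ of $L(\AA(\g,\u))$ whose localization $\AA^X$ has intersection lattice isomorphic to $L(\AA(C_m))$. Concretely, $X$ is the intersection of the hyperplanes corresponding to the edges of $C$, with any added edge replaced by the cone hyperplane $x_0=0$. Since $\AA(C_m)$ is not supersolvable for $m\geq 4$ and supersolvability is inherited by localizations (lower intervals of $L(\AA(\g,\u))$), this forces $\AA(\g,\u)$ to be non-supersolvable. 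The principal obstacle is verifying that $\AA^X$ equals exactly the cycle's sub-arrangement rather than a strict superset: when $C$ uses an added edge, interior vertices of $C$ may have boundary neighbors \emph{outside} $C$, producing boundary hyperplanes that also contain $X$. I expect to resolve this by either choosing $X$ as a slightly coarser flat that avoids these extras, or by passing to a minimal induced cycle of $\mg$ in which no interior vertex has an extraneous boundary neighbor; a direct verification in the representative case $\g=C_5$, $\B=\{v_1,v_3\}$ shows that the localization is already exactly $\AA'$ and has the expected twelve-element cycle lattice, and the general case should follow by a similar careful accounting via the dictionary of paragraph two.
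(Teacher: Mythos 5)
The paper does not actually prove this proposition; it is imported as a citation to \cite[Theorem~1.2]{lutz2017}, and the proof of Theorem~\ref{thm:koszul} simply invokes it. So there is no in-paper argument to compare against; I can only assess your proposal on its own terms.

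Your framing is a natural adaptation of Stanley's argument, and your key observation about induced cycles of $\mg$ (at most two boundary vertices, necessarily consecutive, hence at most one added edge) is correct and useful. The $(\Leftarrow)$ direction is sound in outline: since $\B$ is a clique in $\mg$, Dirac's theorem yields a perfect elimination ordering finishing with $\B$, and the cone hyperplane $x_0=0$ enters the modular chain precisely when the second boundary vertex is absorbed (because injectivity of $\u$ forces $x_0=0$ the moment two boundary values coincide). That transition deserves an explicit rank check, but the plan works.

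The $(\Rightarrow)$ direction, however, has a genuine gap that you flag but do not close. When the chordless cycle $C$ uses the added edge $b_1b_2$, the associated flat forces $x_0=0$ and hence kills \emph{every} boundary coordinate of vertices on $C$; thus any edge from an interior vertex $v_i$ of $C$ to a boundary node $c\notin\{b_1,b_2\}$ contributes a hyperplane to the localization even though $v_ic$ is not a chord of $C$ in $\mg$. Your two proposed repairs are not carried out and neither is obviously correct. Passing to a minimum-length chordless cycle does not eliminate extraneous boundary neighbors: replacing a segment of $C$ with the edges $v_ic$ and $cb_1$ (or $cb_2$) can produce triangles or can itself acquire chords of the form $w_ac$, and one cannot conclude by cycle length alone that a ``clean'' chordless cycle exists. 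Your test case $\g=C_5$, $\B=\{v_1,v_3\}$ happens to have no interior vertex with an extraneous boundary neighbor, so it does not exercise the problematic configuration. A more robust route would be to note that the enlarged localization is itself (the cone over) a Dirichlet arrangement whose associated $\widehat{\g}'$ is again non-chordal, and finish by strong induction on $|E|$ — but this requires checking that localizations of $\AA(\g,\u)$ at such flats are again of Dirichlet type and that non-chordality persists, neither of which appears in the proposal.
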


\subsection{Orlik-Solomon algebras}

Given an ordered central arrangement $\AA$ over $\KK$, let $V$ be the $\KK$-vector space with basis $\{e_a : a\in \AA\}$. Let $\ea=\ea(V)$ be the exterior algebra of $V$. Write $xy=x\wedge y$ in $\ea$. The algebra $\ea$ is graded by taking $\ea^0 = \KK$ and $\ea^p$ to be spanned by all elements of the form $e_{a_1}\cdots e_{a_p}$.

Let $\dif:\ea\to\ea$ be the linear map defined by $\dif 1 = 0$, $\dif e_a = 1$ for all $a\in \AA$, and
\[\dif(xy) = \dif(x)y+(-1)^p x\dif(y)\]
for all $x\in \ea^p$ and $y\in \ea$.

The set $X$ is \emph{dependent} if the normal vectors of the hyperplanes in $X$ are linearly dependent. A \emph{circuit} is a minimal dependent set. If $X=\{a_1,\ldots,a_p\}\subseteq \AA$, assuming the $a_i$ are in increasing order, write $e_X = e_{a_1}\cdots e_{a_p}$ in $\ea$.

\begin{mydef}
	The \emph{Orlik-Solomon algebra} $\os(\AA)$ of a central arrangement $\AA$ is the quotient of $\ea$ by the \emph{Orlik-Solomon ideal}
	\begin{equation}
	I=\langle \dif(e_C) : C\subseteq \AA \mbox{ is a circuit}\rangle.
	\end{equation}
	That is, $\os(\AA) = \ea/I$.
\end{mydef}

 \subsection{Koszul algebras}
 We include the following definition of a Koszul algebra for completeness. A more thorough definition and further discussion can be found in \cite{peeva2010} and \cite{froberg1999}, respectively.
 
 \begin{mydef}
 	A graded $\KK$-algebra $A$ is \emph{Koszul} if the minimal free graded resolution of $\KK$ over $A$ is linear.
 \end{mydef}

  Quadraticity is a key property of Koszul algebras. A \emph{minimal generator} of the Orlik-Solomon algebra $I$ is an element of the form $\dif(e_C)$, where $C$ is a circuit and
\[\dif(e_C)\notin \langle \dif(e_X) : X\subseteq \AA\mbox{ is a circuit with }|X|<|C|\rangle.\]
If the minimal generators of $I$ are of degree 2, then $\os(\AA)$ is called \emph{quadratic}.

\begin{prop}[{\cite[Definition-Theorem 1]{froberg1999}}]
	If $\os(\AA)$ is Koszul, then $\os(\AA)$ is quadratic.
	\label{prop:kosquad}
\end{prop}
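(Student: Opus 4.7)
The plan is to derive the conclusion from the classical fact that any Koszul connected graded $\KK$-algebra is quadratic in the tensor-algebra sense, and then to translate this into the exterior-algebra formulation of quadraticity used in the excerpt.

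First I would unpack the hypothesis. A connected graded $\KK$-algebra $A=\bigoplus_{i\geq 0}A_i$ with $A_0=\KK$ is Koszul exactly when the trivial module $\KK$ has a minimal graded free resolution
\[\cdots\to A(-2)^{\beta_2}\to A(-1)^{\beta_1}\to A\to\KK\to 0\]
in which each free module $F_n=A(-n)^{\beta_n}$ is generated in degree $n$. The map $F_0\to\KK$ is the augmentation, with kernel $A_+=\bigoplus_{i\geq 1}A_i$. Since $A_+$ is generated in degree $1$ by any basis of $V:=A_1$, linearity of $F_1$ forces $\beta_1=\dim_\KK V$, and we may take $F_1\to F_0$ to be the map sending a basis of $V$ to itself. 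The kernel of $F_1\to F_0$ consists of the linear syzygies $\sum r_iv_i=0$ in $A$; linearity at step $2$ says this syzygy module is generated by syzygies with every $r_i\in A_1$. Writing $A=T(V)/\tilde I$ with $\tilde I\subseteq T(V)_{\geq 2}$ a two-sided ideal, this is equivalent to $\tilde I$ being generated by $\tilde I\cap V^{\otimes 2}$, which is the standard definition of $A$ being quadratic.

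I would then specialize to $A=\os(\AA)$. Let $W$ be the $\KK$-span of the generators $e_a$ for $a\in\AA$, so that $\os(\AA)=T(W)/\tilde I$, where $\tilde I$ is generated by the exterior anticommutation relations $e_a\otimes e_a$ and $e_a\otimes e_b+e_b\otimes e_a$ together with lifts of the Orlik-Solomon relations $\dif(e_C)$ for circuits $C\subseteq\AA$. The exterior relations already sit in $W^{\otimes 2}$, so the general result above forces the remaining part of $\tilde I$ to be generated modulo the exterior relations by degree-$2$ elements of $\ea$. Since $\dif(e_C)$ is homogeneous of degree $|C|-1$, a minimal generator $\dif(e_C)$ with $|C|\geq 4$ would violate this; hence every minimal generator of the Orlik-Solomon ideal $I$ has degree $2$, which is precisely the definition of quadraticity used in the excerpt.

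The heart of the argument, namely the deduction of quadraticity from linearity of the first two steps of the minimal resolution of $\KK$, is entirely formal. The only real bookkeeping obstacle is the translation between the two presentations of quadraticity, quotient of $T(W)$ versus quotient of $\ea$; this is harmless because the exterior algebra is itself quadratic, so the quadratic generation of $\tilde I$ in $T(W)$ descends to quadratic generation of $I$ in $\ea$.
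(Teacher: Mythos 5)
The paper does not prove this proposition; it simply cites it as \cite[Definition-Theorem 1]{froberg1999}, a standard fact. Your proposal supplies a self-contained argument, which is essentially the classical one: Koszulness forces $\operatorname{Tor}_2^A(\KK,\KK)$ to be concentrated in degree $2$, and this $\operatorname{Tor}$ group detects the minimal generators of the defining ideal, so the ideal is quadratically generated.

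The outline is correct, but one step is stated more casually than it deserves. You assert that ``the first syzygy module of $A_+$ is generated by linear syzygies'' is \emph{equivalent} to ``$\tilde I$ is generated by $\tilde I\cap V^{\otimes 2}$,'' and treat this as a reformulation. It is true, but it is the real content of the argument, not a notational translation: one has to identify $\operatorname{Tor}_2^A(\KK,\KK)$ with (the dual of) $\tilde I/(V\cdot\tilde I+\tilde I\cdot V)$, which is usually done via the (normalized) bar resolution or a comparison of the minimal resolution with the bar complex. Without naming that identification, the passage from ``the kernel of $F_1\to F_0$ is generated in degree $2$'' to ``$\tilde I$ is generated in degree $2$'' is not quite self-evident, since the kernel of $A\otimes V\to A_+$ contains more than lifts of elements of $\tilde I$. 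If you flag the $\operatorname{Tor}_2$--generators correspondence explicitly (or just cite it), the argument is complete. The final paragraph, translating between the two presentations $T(W)/\tilde I$ and $\ea/I$, is handled correctly: because the anticommutation relations defining $\ea$ inside $T(W)$ are themselves quadratic, quadratic generation of $\tilde I$ in $T(W)$ descends to quadratic generation of $I$ in $\ea$, which is what the paper means by $\os(\AA)$ being quadratic.
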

\section{Proof of Theorem \ref{thm:intro}}
\label{sec:pf}

We prove the following theorem, which implies Theorem \ref{thm:intro}.

\begin{thm}
	Let $\mg$ be the graph obtained from $\g$ by adding an edge between each pair of boundary nodes. The following are equivalent:
	\begin{enumerate}[(i)]
		\item $\mg$ is chordal
		\item $\AA(\g,\u)$ is supersolvable
		\item $\os(\AA(\g,\u))$ is Koszul
		\item $\os(\AA(\g,\u))$ is quadratic.
	\end{enumerate}
	\label{thm:koszul}
\end{thm}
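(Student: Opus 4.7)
The plan is to prove the chain $(\text{i})\Leftrightarrow(\text{ii})\Rightarrow(\text{iii})\Rightarrow(\text{iv})\Rightarrow(\text{i})$. The first three links come for free: $(\text{i})\Leftrightarrow(\text{ii})$ is Proposition \ref{prop:2conn}, $(\text{ii})\Rightarrow(\text{iii})$ is the Shelton--Yuzvinsky theorem cited in the introduction, and $(\text{iii})\Rightarrow(\text{iv})$ is Proposition \ref{prop:kosquad}. The real work is the contrapositive of $(\text{iv})\Rightarrow(\text{i})$: assuming $\mg$ is not chordal, I will exhibit a circuit $C\subseteq\AA(\g,\u)$ of size at least $4$ for which $\dif(e_C)$ is a minimal generator of the Orlik--Solomon ideal $I$, producing a minimal generator of degree $\geq 3$ and violating quadraticity.

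Pick a chordless cycle $Z\subseteq\mg$ of length $k\geq 4$. Because $\mg$ contains a clique on the boundary set $\B$, any two non-consecutive vertices of $Z$ that both lie in $\B$ would provide a chord, so $Z$ uses at most one added edge of $\mg\setminus\g$. This yields two cases: \textbf{(a)} $Z$ lies in $\g$ and contains at most one boundary vertex, or \textbf{(b)} $Z$ consists of a single added edge $j_1 j_2$ between boundary nodes together with a chordless path of $\g$ from $j_1$ to $j_2$ whose internal vertices are all interior. In each case the edges of $Z$ determine a circuit $C=C_Z\subseteq\AA(\g,\u)$ of size $k$, with the cone hyperplane $H_\infty$ defined by $x_0=0$ appearing in $C$ exactly in case~(b).

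The main claim is that $\dif(e_C)$ is a minimal generator of $I$. Suppose for contradiction that $\dif(e_C)=\sum_i y_i\dif(e_{X_i})$ in $\ea$ with each $X_i$ a circuit of size less than $|C|$, and fix $a^*\in C$. Comparing coefficients of the monomial $\pm e_{C\setminus\{a^*\}}$ on both sides, and using that every proper subset of $C$ is independent, one sees that each $(y_i,X_i)$ contributing to this monomial must have $X_i$ satisfying $|X_i\setminus C|=1$ and $a^*\notin X_i$; call such an $X_i$ a \emph{shortcut of $C$ at $a^*$}. So if $a^*$ is chosen with no shortcut, the right-hand coefficient vanishes while the left-hand one is $\pm 1$, giving the desired contradiction.

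It remains to choose $a^*$ so that no shortcut exists. In case (a) I claim that no shortcut exists at any $a^*$: a shortcut avoiding $H_\infty$ is a cycle of $\g$ obtained from a connected sub-arc of $Z$ together with one extra edge of $\g$, forcing that edge to be a chord of $Z$ in $\g$; and a shortcut containing $H_\infty$ corresponds to a path in $\g$ between two distinct boundary nodes whose edges all lie in $Z$, impossible since $Z$ has at most one boundary vertex. In case (b) I take $a^*=H_\infty$: a shortcut containing $H_\infty$ automatically fails to cover $a^*$, while a shortcut omitting $H_\infty$ is again a cycle of $\g$ built from a sub-arc of $Z$'s path and an extra edge of $\g$, and this extra edge must be a chord of $Z$ in $\mg$, contradicting chordlessness. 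The most delicate point is case (b): shortcuts containing $H_\infty$ (paths to a third boundary node completed by $H_\infty$) can genuinely exist, but every such shortcut contains $H_\infty$ by definition, so the chosen witness $a^*=H_\infty$ remains uncovered.
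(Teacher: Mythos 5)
Your chain of implications and the contrapositive strategy for (iv) $\Rightarrow$ (i) track the paper's proof closely: a chordless cycle of length $\geq 4$ in $\mg$ yields a chordless circuit $C$ of type (A) or (B) in Proposition \ref{prop:circuits} (this is the content of Proposition \ref{prop:chordless}), and the claim that $\dif(e_C)$ is then a minimal generator of $I$ of degree $\geq 3$ is the forward implication in Proposition \ref{prop:mingen}. Your ``shortcut'' device of comparing the coefficient of $e_{C\setminus a^*}$ is a clean way of making explicit what the paper states tersely (``There is a chord $i$ of $C$ if and only if there is a circuit $C'$ with a term of $\dif(C')$ dividing $e_2\cdots e_r$''). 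Because you only need one direction, you skip the explicit syzygies of Lemmas \ref{lem:osy} and \ref{lem:os2cross} and the chord factorizations inside Proposition \ref{prop:mingen}, which the paper uses to characterize \emph{all} minimal generators; that is a genuine economy.

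There is, however, a gap. When you assert that a shortcut avoiding $H_\infty$ ``is a cycle of $\g$,'' you are tacitly assuming it is a circuit of type (B). But by Proposition \ref{prop:circuits} a circuit of $\AA(\g,\u)$ lying inside $E$ can also be of type (C): a minimal \emph{acyclic} set containing two distinct crossings. Your chord argument says nothing about such a set, and you never rule it out. In case (a) this is quick: a type-(C) circuit meets at least 3 boundary nodes, while here $X_i\subseteq (Z\setminus a^*)\cup\{x\}$ with $Z$ meeting at most one boundary node and the extra edge $x\in E$ meeting at most one (as $\B$ is independent in $\g$), so $X_i$ meets at most 2. In case (b) the same count only gives at most 3, so you need a further step: a connected type-(C) $X_i\subseteq P\cup\{x\}$ that meets both $j_1$ and $j_2$ must (since $j_3$ is a leaf attached by $x$) contain a $j_1$--$j_2$ path lying entirely in $X_i\cap P$, forcing $X_i\cap P=P$ and hence $|X_i|=|P|+1=|C|$, contradicting $|X_i|<|C|$; the disconnected type-(C) case requires 4 boundary nodes and is already excluded by the count. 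Once type-(C) shortcuts are disposed of, your proof is complete and agrees with the paper's.
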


We write $x$ instead of $\{x\}$ for all single-element sets. Let $\eh$ be an element not in $E$, and let $\eo=E\cup \eh$, so that $\AA(\g,\u)$ is indexed by $\eo$. Fix an ordering of $\eo$ with $\eh$ minimal. We say that $C\subseteq\eo$ is a \emph{circuit} if the corresponding subset of $\AA$ is a circuit.

\begin{mydef}
	A set $X\subseteq E$ is a \emph{crossing} if it is a minimal path between 2 distinct boundary nodes.
\end{mydef}

\begin{prop}[{\cite[Proposition 4.10]{lutz2018mat}}]
	A set $C\subseteq \eo$ is a circuit if and only if one of the following holds:
	\begin{enumerate}[(A)]
		\item $C = X\cup \eh$ for some crossing $X$
		\item $C\subseteq E$ is a cycle of $\g$ meeting at most 1 boundary node
		\item $C\subseteq E$ is a minimal acyclic set containing 2 distinct crossings.
	\end{enumerate}
	\label{prop:circuits}
\end{prop}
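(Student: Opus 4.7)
My plan is to prove both directions via explicit linear algebra on the hyperplane normal vectors. Let $e_i$ denote the standard basis vector at $i \in V\setminus \B$ and $e_0$ the basis vector at the coning coordinate; then the normal to the hyperplane for $ij \in E\setminus \be$ is $e_i - e_j$, for $ij \in \be$ with $j \in \B$ is $e_i - \u(j)\,e_0$, and for $\eh$ is $e_0$.

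For the reverse direction, I would verify each case by producing a minimal dependency. In case (A), orienting the crossing $X$ as a path from $j_1$ to $j_2$ and summing the signed normals telescopes to $(\u(j_2)-\u(j_1))\,e_0$, a nonzero multiple of the normal to $\eh$ by injectivity of $\u$; hence $X \cup \eh$ is dependent, and minimality follows from the linear independence of every proper subpath's normals. In case (B), a cycle contained in $V\setminus \B$ gives the standard graphic dependency, and a cycle meeting exactly one boundary node $j$ yields a telescoping dependency in which the two $\u(j)\,e_0$ contributions cancel. In case (C), each of the two crossings contributes (as in (A)) a dependency proportional to $e_0$, so a suitable scalar combination eliminates $e_0$ and yields a dependency supported on edges only, with minimality enforced by the hypothesis on $C$.

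For the forward direction, suppose $C \subseteq \eo$ is a circuit. A preliminary observation is that any cycle of $\g$ produces a telescoping dependency summing to zero (interior-vertex contributions cancel by the usual graphic argument, and at each boundary vertex of the cycle the two incident boundary-edge normals contribute canceling $\u(j)\,e_0$ terms), so every cycle of $\g$ is dependent. If $\eh \in C$, then $C \setminus \eh$ is independent, hence a forest in $\g$; the dependency equation has $\alpha_{\eh} \ne 0$, and propagating flow conservation through the forest along with the $e_0$-balance forces $C \setminus \eh$ to be a minimal path between two distinct boundary nodes, placing $C$ in case (A). If $\eh \notin C$, then $C \subseteq E$ is edge-dependent; the interior flow-conservation constraints together with the $e_0$-coordinate equation $\sum_{ij \in C \cap \be}\u(j)\,\alpha_{ij} = 0$ split into subcases by the set $\B_C \subseteq \B$ of boundary nodes incident to edges of $C$, with $|\B_C|\le 1$ yielding case (B) and $|\B_C|\ge 2$ yielding case (C).

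The main obstacle is the $|\B_C| \ge 2$ subcase of the forward direction: one must identify exactly two crossings within $C$ whose combined dependency exhausts $C$, and rule out configurations with three or more distinct boundary endpoints. I would attack this by selecting two crossings $X_1, X_2 \subseteq C$ terminating at distinct pairs of boundary nodes, eliminating $e_0$ between their $e_0$-proportional dependencies (using injectivity of $\u$ to guarantee the elimination coefficients $\u(j_2^{(i)}) - \u(j_1^{(i)})$ are nonzero), and invoking minimality of $C$ to conclude that $C$ coincides with the support of the resulting combination. A minimal counterexample argument then precludes $|\B_C|\ge 3$: any three crossings yield two whose combined support is a proper dependent subset of $C$, contradicting its minimality.
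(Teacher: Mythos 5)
The paper does not actually prove this proposition; it is imported verbatim from \cite[Proposition 4.10]{lutz2018mat}, so there is no in-paper proof to compare your argument against. Your overall strategy (explicit linear algebra on the coned normal vectors, splitting on whether $\eh\in C$ and on the number of boundary nodes met) is a reasonable way to attack the statement, and the ``reverse'' direction and the $\eh\in C$ subcase of the ``forward'' direction are essentially right.

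However, there is a genuine error in the $|\B_C|\ge 2$ subcase. You assert that a minimal-counterexample argument precludes $|\B_C|\ge 3$ because ``any three crossings yield two whose combined support is a proper dependent subset of $C$.'' This is false, and the paper itself contradicts it: the left-hand picture of Figure \ref{fig:circuits} and Lemma \ref{lem:osy} concern circuits $C=P_1\cup P_2\cup P_3$ built from three disjoint paths sharing a single interior hub, with crossings $X_i = P_j\cup P_k$ reaching three distinct boundary nodes. For such a $C$, any two of the three crossings have union equal to \emph{all} of $C$, not a proper subset, so your elimination step produces a dependency supported on all of $C$ and there is no contradiction with minimality. These theta-type circuits really are circuits with $|\B_C|=3$, so your classification as written would omit them. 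Relatedly, your case split sends every $|\B_C|\ge 2$ circuit to type (C) without ever establishing the acyclicity required there; you would need a separate argument to handle (or exclude) cycles of $\g$ meeting two or more boundary nodes, a delicate edge case the sketch does not touch.
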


The circuits of type (C) in Proposition \ref{prop:circuits} come in two flavors: one contains 3 distinct crossings, while the other contains only 2. These are illustrated in Figure \ref{fig:circuits}. Circuits of type (C) containing only 2 distinct crossings are either disconnected, as pictured, or connected with both crossings meeting at a single boundary node.

\begin{figure}[ht]
		\begin{tikzpicture}[scale=1.5]
	\coordinate (o) at (0,0);
	\coordinate (a1) at (90:1/3);
	\coordinate (a2) at (90:2/3);
	\coordinate (a3) at (90:1);
	\coordinate (b1) at (210:1/2);
	\coordinate (b2) at (210:1);
	\coordinate (c1) at (330:1/3);
	\coordinate (c2) at (330:2/3);
	\coordinate (c3) at (330:1);
	\draw (o) -- (a1) -- (a2) -- (a3);
	\draw (o) -- (b1) -- (b2);
	\draw (o) -- (c1) -- (c2) -- (c3);
	\draw[fill=black] (o) circle (2pt);
	\draw[fill=black] (a2) circle (2pt);
	\draw[fill=black] (a1) circle (2pt);
	\draw[fill=white] (a3) circle (2pt);
	\draw[fill=white] (b2) circle (2pt);
	\draw[fill=black] (b1) circle (2pt);
	\draw[fill=black] (c2) circle (2pt);
	\draw[fill=black] (c1) circle (2pt);
	\draw[fill=white] (c3) circle (2pt);
	
	\def\a{3.5}
	\coordinate (d1) at (\a+1.5,-1/2);
	\coordinate (d2) at (\a+1.3,1/4);
	\coordinate (d3) at (\a+1.5,1);
	\coordinate (e1) at (\a,-1/2);
	\coordinate (e2) at (\a+0.2,0);
	\coordinate (e3) at (\a+0.2,1/2);
	\coordinate (e4) at (\a,1);
	\draw (d1) -- (d2) -- (d3);
	\draw (e1) -- (e2) -- (e3) -- (e4);
	\draw[fill=white] (d1) circle (2pt);
	\draw[fill=black] (d2) circle (2pt);
	\draw[fill=white] (d3) circle (2pt);
	\draw[fill=white] (e1) circle (2pt);
	\draw[fill=black] (e2) circle (2pt);
	\draw[fill=black] (e3) circle (2pt);
	\draw[fill=white] (e4) circle (2pt);
	\end{tikzpicture}
	\caption{Two circuits of $\eo$ with boundary nodes marked in white.}
	\label{fig:circuits}
\end{figure}
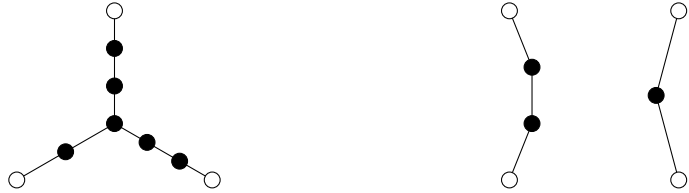

Taken together, the following 2 lemmas imply that circuits of type (C) do not contribute minimal generators to the Orlik-Solomon ideal $I$. When the usage is clear we will write $S = e_S$, so that $S$ is considered as an element of $\ea$ and a subset of $\eo$.

\begin{lem}
	Let $C\subseteq E$ be a circuit containing distinct crossings $X_1$, $X_2$ and $X_3$. In $\ea$ we have
	\[\dif(C) \in \langle \dif(\eh X_1),\dif(\eh X_2),\dif(\eh X_3)\rangle.\]
	
	\begin{proof}
		There are mutually disjoint paths $P_1, P_2,P_3\subseteq E$ in $\g$ such that $C=P_1\cup P_2\cup P_3$ and $X_i = P_j\cup P_k$ for distinct $i,j,k$. Write $a_i = |P_i|$, and suppose without loss of generality that $X_1 = P_2P_3$, $X_2 = P_1P_3$ and $X_3 = P_1P_2$ in $\ea$. We have
		\begin{align*}
		\dif(\eh X_3) &= P_1P_2 - \eh\dif(P_1)P_2 - (-1)^{a_1}\eh P_1\dif(P_2)\\
		\dif(\eh X_2) &= P_1P_3 - \eh\dif(P_1)P_3 - (-1)^{a_1}\eh P_1\dif(P_3)\\
		\dif(\eh X_1) &= P_2P_3 - \eh\dif(P_2)P_3 - (-1)^{a_2}\eh P_2\dif(P_3)
		\end{align*}
		Thus
		\begin{align*}
		\dif(P_3)\dif(\eh X_3) &= (-1)^{(a_1+a_2)(a_3-1)}(P_1P_2\dif(P_3) - \eh\dif(P_1)P_2\dif(P_3)\\ &\qquad- (-1)^{a_1}\eh P_1\dif(P_2)\dif(P_3))\\
		\dif(P_2)\dif(\eh X_2) &= (-1)^{a_1(a_2-1)}(P_1\dif(P_2)P_3 - \eh\dif(P_1)\dif(P_2)P_3 \\ &\qquad + (-1)^{a_1+a_2}\eh P_1\dif(P_2)\dif(P_3))\\
		\dif(P_1)\dif(\eh X_1) &= \dif(P_1)P_2P_3 +(-1)^{a_1}\eh\dif(P_1)\dif(P_2)P_3\\ &\qquad +(-1)^{a_1+a_2}\eh\dif(P_1) P_2\dif(P_3)
		\end{align*}
		Since $C=P_1P_2P_3$, we have
		\[\dif(C)=\dif(P_1)P_2P_3 + (-1)^{a_1}P_1\dif(P_2)P_3 + (-1)^{a_1+a_2}P_1P_2\dif(P_3),\]
		A computation now gives
		\[\dif(C)=\dif(P_1)\dif(\eh X_1)+(-1)^{a_1a_2}\dif(P_2)\dif(\eh X_2) + (-1)^{(a_1+a_2)a_3}\dif(P_3)\dif(\eh X_3),\]
		proving the result.
	\end{proof}
	\label{lem:osy}
\end{lem}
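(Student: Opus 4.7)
The plan is to express $\dif(C)$ as an explicit linear combination of the three elements $\dif(\eh X_1)$, $\dif(\eh X_2)$, $\dif(\eh X_3)$ in $\ea$, with coefficients built directly from the $P_i$'s.

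The first step is to unpack the structure of $C$. A circuit of type (C) containing three distinct crossings has the ``theta'' shape drawn on the left of Figure~\ref{fig:circuits}: there exist three edge-disjoint paths $P_1,P_2,P_3\subseteq E$ sharing a single interior vertex, each ending at a distinct boundary node, so that $C=P_1\cup P_2\cup P_3$ and, after relabelling, $X_i=P_j\cup P_k$ for the complementary pair. Setting $a_i=|P_i|$ and viewing each $P_i$ as a wedge-product monomial in $\ea$ in the fixed ordering, the signed Leibniz rule gives a three-term expansion of $\dif(\eh X_i)$: one ``boundary-free'' monomial $\pm P_jP_k$ together with two ``$\eh$-terms'' that each carry an $\eh$ factor and replace exactly one of $P_j,P_k$ by its $\dif$.

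The heart of the argument is the following observation. Left-multiplying $\dif(\eh X_i)$ by $\dif(P_i)$ produces, up to a computable sign obtained from moving the degree-$(a_i-1)$ element $\dif(P_i)$ past factors of known degree, (a)~a term $\pm\dif(P_i)P_jP_k$ which is one of the three summands in the Leibniz expansion of $\dif(P_1P_2P_3)=\dif(C)$, and (b)~two residual monomials of the shape $\eh\,\dif(P_r)\dif(P_s)P_t$ or $\eh\,\dif(P_r)P_s\dif(P_t)$. Each of the three possible residual monomials carries derivatives of two of the $P_r$'s, so it appears in exactly two of the three products $\dif(P_i)\dif(\eh X_i)$. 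One can therefore cancel all six residual terms in pairs by an appropriate choice of signs $\varepsilon_i\in\{\pm 1\}$ in the linear combination $\sum_i \varepsilon_i\,\dif(P_i)\dif(\eh X_i)$, and what remains is exactly the three-term Leibniz expansion of $\dif(C)$.

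The main (and essentially the only) obstacle is sign bookkeeping in the graded exterior algebra: one must verify that the signs $(-1)^{a_r a_s}$ and $(-1)^{a_r}$ arising from (a) moving $\dif(P_i)$ past $\eh$ and past the $P_j$'s, (b) the Leibniz expansion of $P_jP_k$ inside each $\dif(\eh X_i)$, and (c) the Leibniz expansion of $\dif(P_1P_2P_3)$ are mutually consistent with a single coherent choice of the $\varepsilon_i$. No further combinatorial input is needed; once the signs line up, a single expansion establishes the identity, and hence the membership $\dif(C)\in\langle\dif(\eh X_1),\dif(\eh X_2),\dif(\eh X_3)\rangle$.
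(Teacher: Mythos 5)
Your proposal follows essentially the same route as the paper's proof: decompose $C$ into three mutually disjoint paths $P_1,P_2,P_3$ with $X_i=P_j\cup P_k$, left-multiply each $\dif(\eh X_i)$ by $\dif(P_i)$, and observe that the six residual $\eh$-terms come in three pairs that cancel under a suitable choice of signs, leaving the Leibniz expansion of $\dif(C)$. The only slip is calling the underlying shape a ``theta'' --- it is actually a star (three paths meeting at a single interior vertex, each ending at a distinct boundary node, so the circuit is acyclic), which is exactly what you then describe --- but this does not affect the argument.
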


\begin{lem}
	Suppose that $X_1$ and $X_2$ are crossings such that no vertex in $V\setminus \B$ is met by both $X_1$ and $X_2$. In $\ea$ we have
	\[\dif(C)\in \langle \dif(\eh X_1),\dif(\eh X_2)\rangle.\]
	\begin{proof}
		The proof is similar to that of Lemma \ref{lem:osy}. In particular, we have
		\[\dif(X_1X_2) = \dif(X_1\eh)\dif(X_2) + \dif(X_1)\dif(X_2\eh),\]
		proving the result.
	\end{proof}
	\label{lem:os2cross}
\end{lem}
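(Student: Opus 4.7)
The plan is to mimic the template of Lemma~\ref{lem:osy}: identify $C$ explicitly as an edge-disjoint union of $X_1$ and $X_2$, then apply the graded Leibniz rule to rewrite $\dif(C)$ as a combination of products, each containing $\dif(\eh X_1)$ or $\dif(\eh X_2)$ as a factor.

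First I would unpack the hypothesis using the discussion following Proposition~\ref{prop:circuits}. A type (C) circuit containing exactly two distinct crossings is either disconnected or connected with both crossings meeting at a single boundary node. The assumption that no vertex in $V\setminus\B$ lies in both $X_1$ and $X_2$ rules out interior overlap and forces the edge sets of $X_1$ and $X_2$ to be disjoint, so that $C=X_1\cup X_2$ and $|C|=|X_1|+|X_2|$. After fixing an ordering, $e_C=\pm e_{X_1}e_{X_2}$ in $\ea$, and it suffices to show that $\dif(X_1X_2)\in\langle\dif(\eh X_1),\dif(\eh X_2)\rangle$.

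Write $p=|X_1|$. The graded Leibniz rule together with $\dif(\eh)=1$ gives
\[
\dif(\eh X_i)=X_i-\eh\,\dif(X_i)\quad(i=1,2),\qquad \dif(X_1X_2)=\dif(X_1)X_2+(-1)^pX_1\dif(X_2).
\]
The key computational step is the identity
\[
\dif(X_1X_2)=\dif(X_1)\,\dif(\eh X_2)+(-1)^p\,\dif(\eh X_1)\,\dif(X_2),
\]
which follows by expanding the right-hand side and observing that the two cross-terms proportional to $\eh\,\dif(X_1)\dif(X_2)$ cancel once the graded commutation $\dif(X_1)\eh=(-1)^{p-1}\eh\,\dif(X_1)$ is applied. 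This exhibits $\dif(C)$ as an element of $\langle\dif(\eh X_1),\dif(\eh X_2)\rangle$.

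The main obstacle is the sign bookkeeping: the whole argument rests on the $\eh$-terms annihilating, so the coefficient $(-1)^p$ on the second summand must be calibrated to the parity of $|X_1|$ for the right-hand side to reduce to $\dif(X_1X_2)$ rather than a signed variant. A secondary, less technical point worth making explicit is the identification $C=X_1\cup X_2$ drawn from Proposition~\ref{prop:circuits} and the hypothesis, since the conclusion of the lemma is phrased in terms of $\dif(C)$ rather than an abstract product in $\ea$.
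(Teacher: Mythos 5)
Your proposal is correct and follows essentially the same line as the paper: express $C$ as an edge-disjoint union $X_1\cup X_2$, work in the exterior algebra, and exhibit $\dif(X_1X_2)$ as an explicit combination of $\dif(\eh X_1)$ and $\dif(\eh X_2)$ via the graded Leibniz rule. In fact your identity
\[
\dif(X_1X_2)=\dif(X_1)\,\dif(\eh X_2)+(-1)^{p}\,\dif(\eh X_1)\,\dif(X_2),\qquad p=|X_1|,
\]
is the sign-correct version; the identity as printed in the paper, $\dif(X_1X_2)=\dif(X_1\eh)\dif(X_2)+\dif(X_1)\dif(X_2\eh)$, fails for general parities of $|X_1|,|X_2|$ (e.g.\ when $|X_1|=|X_2|=1$ it gives $2\eh-a-b$ rather than $b-a$), so the explicit sign bookkeeping you insisted on is not merely cosmetic but is genuinely needed to make the step rigorous. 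Your preliminary observation that the hypothesis forces $C=X_1\cup X_2$ with disjoint edge sets is the right identification to make and matches the discussion of type~(C) circuits in the paper.
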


Let $C\subseteq \eo$ be a circuit. An element $i\in \eo$ is a \emph{chord} of $C$ if there exist circuits $C_1$ and $C_2$ such that $i = C_1\cap C_2$ and $C=(C_1\setminus C_2)\cup (C_2\setminus C_1)$. If $C$ admits no chord, then $C$ is \emph{chordless}.

\begin{prop}
	The minimal generators of $I$ are the elements of the form $\dif(C)$, where $C\subseteq \eo$ is a chordless circuit of type (A) or (B) in Proposition \ref{prop:circuits}.
	
	\begin{proof}
		Let $J$ be the ideal of $\ea$ generated by the elements of the form $\dif(C)$ for all circuits $C$ of types (A) and (B) in Proposition \ref{prop:circuits}. Note that any circuit of type (C) is described by either Lemma \ref{lem:osy} or \ref{lem:os2cross}. It follows that $J=I$ is the Orlik-Solomon ideal.
		
		Let $C\subseteq \eo$ be a circuit of type (A) or (B). It remains to show that $\dif(C)$ is a minimal generator of $I$ if and only if $C$ is chordless. Notice that a chord of $C$ is any edge $i\in E$ connecting two vertices met by $E\cap C$.
		
		Suppose first that $C$ is of type (B), and write $C=\{e_1,\ldots,e_r\}$. We have
		\[\dif(C) = \sum_{j=1}^r (-1)^{j-1} e_1\cdots \widehat{e_j}\cdots e_r.\]
		There is a chord $i$ of $C$ if and only if there is a circuit $C'$ of with a term of $\dif(C')$ dividing $e_2\cdots e_r$. Suppose that such a chord $i$ exists, and partition $C$ into two paths $P_1$ and $P_2$ such that $P_1\cup i$ and $P_2\cup i$ are cycles of $\g$. Write $a_j = |P_j|$, and suppose without loss of generality that $C=P_1P_2$ in $\ea$. We have
		\[\dif(C)=\dif(P_1)\dif(iP_2)+(-1)^{a_1a_2}\dif(P_2)\dif(iP_1),\]
		so $\dif(C)$ is not a minimal generator. Thus if $C$ is a cycle of $\g$, then $\dif(C)$ is a minimal generator of $I$ if and only if $C$ is chordless.
		
		Now suppose that $C =X\cup \eh$ for some crossing $X$. We have $\dif(C) = X - \eh\dif(X)$. There is a circuit $C'$ with a term of $\dif(C')$ dividing $X$ if and only if there is a chord $i$ of $X$. Suppose that such a chord $i$ exists. Partition $X$ into two sets $X_1$ and $X_2$ such that $X_1\cup i$ is a cycle of $\g$ and $X_2\cup i$ is a crossing. Write $b_j = |X_j|$, and suppose without loss of generality that $X=X_1X_2$ in $\ea$. We have
		\[(-1)^{b_1}\dif(C) = \dif(X_1)\dif(\eh i X_2) + (\eh \dif(X_2) + (-1)^{b_2}X_2)\dif(i X_1),\]
		where $X_1\cup i$ and $X_2\cup \{\eh, i\}$ are circuits of smaller size than $C$. Hence $\dif(C)$ is not a minimal generator. Thus if $C=X\cup \eh$ for some crossing $X$, then $\dif(C)$ is a minimal generator of $I$ if and only if $C$ is chordless. The result follows.
	\end{proof}
	\label{prop:mingen}
\end{prop}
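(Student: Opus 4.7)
My plan is to prove the proposition in two main stages: first reduce to circuits of types (A) and (B), then within those circuits characterize which give minimal generators.

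For the first stage, I would let $J$ be the ideal generated by $\dif(C)$ for all type (A) and (B) circuits, and show $J=I$. The containment $J\subseteq I$ is immediate. For the reverse, every circuit of type (C) falls into exactly one of the two situations covered by Lemmas \ref{lem:osy} and \ref{lem:os2cross}: either it is the ``theta-graph'' shape containing three distinct crossings $X_1,X_2,X_3$ (so Lemma \ref{lem:osy} applies), or it contains only two crossings $X_1,X_2$ that share no interior vertex (so Lemma \ref{lem:os2cross} applies). Both lemmas express $\dif(C)$ in terms of elements $\dif(\eh X_i)$ coming from type (A) circuits, which lie in $J$.

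For the second stage I would handle types (B) and (A) separately, and in each case show the ``chord iff non-minimal'' equivalence. The key observation I would first record is that for circuits of these types, a chord in the matroidal sense is exactly an edge $i\in E$ of $\g$ joining two vertices of $E\cap C$ that is not itself in $C$ (the marker $\eh$ can never serve as a chord of a type (A) or (B) circuit, because it lies in only one circuit of the relevant form).

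For the ``chord $\Rightarrow$ non-minimal'' direction I would produce explicit decompositions. If $C$ is a cycle of type (B) with chord $i$, split $C$ into two arcs $P_1,P_2$ so that $P_1\cup i$ and $P_2\cup i$ are shorter cycles, then verify by direct expansion (using $\dif(P_jP_k)=\dif(P_j)P_k+(-1)^{|P_j|}P_j\dif(P_k)$ and $\dif\circ\dif=0$) that
\[
\dif(C)=\dif(P_1)\dif(iP_2)\pm\dif(P_2)\dif(iP_1).
\]
If $C=X\cup\eh$ is type (A) and the crossing $X$ has chord $i$, split $X=X_1\cup X_2$ so that $X_1\cup i$ is a cycle and $X_2\cup i$ is a shorter crossing; then an analogous direct expansion should give $\dif(C)$ as an $\ea$-combination of $\dif(X_1\cup i)$ and $\dif(\eh\cup X_2\cup i)$, both of strictly smaller degree.

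For the ``chordless $\Rightarrow$ minimal'' direction I would argue by inspecting monomial terms. If $\dif(C)=\sum f_k\dif(C_k)$ with $|C_k|<|C|$, then some monomial in some $\dif(C_k)$ must divide one of the squarefree monomials appearing in $\dif(C)$, forcing $C_k\setminus\{\text{one element}\}\subseteq C$; tracing through the combinatorial description of circuits (Proposition \ref{prop:circuits}) shows that this forces the existence of an edge of $\g$ that shortcuts $C$, i.e.\ a chord. The main obstacle I anticipate is the sign bookkeeping in the explicit identities, together with the case analysis for type (A) where the decomposition must produce one cycle and one smaller crossing in a way compatible with the ordering convention placing $\eh$ first.
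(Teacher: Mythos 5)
Your proposal follows essentially the same line of argument as the paper's proof: reduce to circuits of types (A) and (B) via Lemmas \ref{lem:osy} and \ref{lem:os2cross}, characterize chords as edges of $\g$ joining two vertices met by $E\cap C$, give explicit decompositions of $\dif(C)$ in the presence of a chord (splitting a type (B) cycle into two smaller cycles through the chord, and a type (A) crossing into a cycle plus a shorter crossing), and use monomial inspection for the chordless-implies-minimal direction — a step the paper leaves somewhat implicit in the phrase ``there is a chord $i$ of $C$ if and only if there is a circuit $C'$ with a term of $\dif(C')$ dividing $e_2\cdots e_r$.'' The one small quibble is your justification that $\eh$ cannot be a chord ``because it lies in only one circuit of the relevant form'' — $\eh$ in fact lies in many type (A) circuits; the correct reason is that $\eh\in C$ for type (A) circuits, while for type (B) circuits any pair of type (A) circuits meeting only in $\eh$ would have symmetric difference passing through two boundary nodes, which cannot be a type (B) circuit — but your conclusion and the overall argument are sound.
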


\begin{prop}
	The graph $\mg$ is chordal if and only if there are no chordless circuits of type (A) or (B) in Proposition \ref{prop:circuits} having size $\geq 4$.
	
	\begin{proof}
		Let $\me$ be the set of edges of $\mg$ not in $E$. Suppose that $C$ is a chordless circuit of size $k\geq 4$. If $C=X\cup \eh$ is of type (A) for some crossing $X$, then there is $e\in \me$ such that $X\cup e$ is a cycle of $\mg$ admitting no chord. If $C$ is of type (B), then $C$ is a cycle of $\g$ (and hence $\mg$) admitting no chord. The ``only if'' direction follows. Now suppose that $\mg$ has a cycle $Z$ of size $\geq 4$ admitting no chord. Then either $Z\subseteq E$, in which case $Z$ is a circuit of type (B); or $Z\cap \me$ consists of a single edge $e$, in which case $(Z\setminus e)\cup \eh$ is a circuit of type (A).
	\end{proof}
	\label{prop:chordless}
\end{prop}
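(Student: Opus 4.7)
The plan is to prove each direction by contrapositive, translating between matroid chords of circuits (as defined just before Proposition \ref{prop:mingen}) and graph chords of cycles in $\mg$. The key bridge, extracted from the proof of Proposition \ref{prop:mingen}, is that a matroid chord of a type (B) circuit $C\subseteq E$ corresponds exactly to an edge of $E$ joining two vertices of $C$, and a matroid chord of a type (A) circuit $X\cup \eh$ corresponds exactly to an edge of $E$ joining two vertices of $X$ (i.e., a graph chord of the path $X$ inside $\g$). This correspondence will let me trade one notion of chordlessness for the other.

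For the ``only if'' direction, I start with a chordless circuit $C$ of type (A) or (B) with $|C|\geq 4$ and produce a chordless cycle of length $\geq 4$ in $\mg$. If $C$ is of type (B), then $C$ itself is already a cycle of $\g\subseteq \mg$ of the required length; any graph chord of $C$ in $\mg$ must lie in $E$ because $C$ meets at most one boundary node (so no element of $\me$ can join two vertices of $C$), and such a chord would give a matroid chord of $C$. If $C=X\cup \eh$ is of type (A) between boundary nodes $b_1,b_2$, I take $Z=X\cup\{b_1b_2\}$, a cycle in $\mg$ of length $|X|+1\geq 4$. A case analysis of a hypothetical graph chord $f$ of $Z$ in $\mg$ handles the possibilities: $f$ joining two internal vertices of $X$ becomes a matroid chord of $C$; $f$ joining $b_i$ to an internal vertex is ruled out by minimality of $X$ as a crossing; $f$ joining $b_1$ and $b_2$ is impossible since $\B$ is edgeless in $\g$ and $b_1b_2\in Z$ already.

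For the ``if'' direction, I start with a chordless cycle $Z$ in $\mg$ of length $\geq 4$ and produce a chordless type (A) or (B) circuit of size $\geq 4$. First I determine $|Z\cap \B|$. Any two distinct boundary vertices in $Z$ are joined by an edge of $\me$, which must belong to $Z$ (otherwise it is a chord), so all boundary vertices of $Z$ are pairwise adjacent in $Z$; since $Z$ is a cycle of length $\geq 4$, this forces $|Z\cap \B|\leq 2$. If $|Z\cap \B|\leq 1$, every edge of $Z$ has a non-boundary endpoint, so $Z\subseteq E$ and $Z$ is a type (B) circuit of the required size. If $|Z\cap \B|=2$ with $b_1,b_2$ adjacent in $Z$ via $e\in \me$, then $X=Z\setminus e$ lies in $E$ (by the same analysis of endpoints) and is a path from $b_1$ to $b_2$; its minimality, and hence its being a crossing, follows because any graph chord of $X$ in $\g$ would also be a chord of $Z$ in $\mg$. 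In either case, the resulting circuit is matroid-chordless by the graph-chord/matroid-chord correspondence recalled above.

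The main obstacle is the bookkeeping around which circuits have which chords: verifying that every potential graph chord of the constructed $\mg$-cycle either yields, or is ruled out alongside, a matroid chord of the associated circuit. The argument deliberately piggybacks on the explicit chord analysis inside the proof of Proposition \ref{prop:mingen} so as not to re-examine the algebraic identities.
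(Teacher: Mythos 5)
Your proof follows the same structure as the paper's: argue both directions by contrapositive, using the matroid-chord/graph-chord correspondence extracted from Proposition \ref{prop:mingen} as the bridge. You fill in details the paper's terse proof leaves to the reader (the bound $|V(Z)\cap\B|\leq 2$, the explicit case analysis of chords of $Z$), which is welcome. However, there is a recurring conceptual slip: you appear to treat a crossing (a minimal path between two boundary nodes) as if it were by definition a \emph{chordless} path, which it is not. Minimality here means only that no proper subset of $X$ is again a path between two boundary nodes --- equivalently, the only boundary nodes on $X$ are its endpoints --- and a crossing may perfectly well have chords in $E$.

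This shows up in two places. In the ``only if'' direction, case 2 of your chord analysis ($f$ joining $b_i$ to an internal vertex of $X$) is \emph{not} ruled out by minimality of $X$: such an $f$ can exist when $X$ is a crossing. The correct reason it leads to a contradiction is that any such $f\in E$ joins two vertices met by $X=E\cap C$ and is therefore a matroid chord of $C=X\cup\eh$ --- exactly as your own stated bridge says --- contradicting the assumption that $C$ is chordless. (You were already using this reasoning for the ``two internal vertices'' case; the boundary-to-internal case is no different.) Likewise, in the ``if'' direction, the claim that $X=Z\setminus e$ is a crossing does not follow ``because any graph chord of $X$ in $\g$ would also be a chord of $Z$''; it follows from the fact you already established that the only boundary vertices on $Z$ are $b_1,b_2$, so every internal vertex of the path $X$ lies in $V\setminus\B$, which is precisely the minimality condition defining a crossing. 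Chordlessness of $Z$ is instead what you need (and correctly use) to conclude that the resulting circuit $X\cup\eh$ has no matroid chord. With these two justifications repaired, the argument is correct and matches the paper's.
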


\begin{proof}[Proof of Theorem \ref{thm:koszul}]
	(i) $\Rightarrow$ (ii): This follows from Proposition \ref{prop:2conn}.
	(ii) $\Rightarrow$ (iv): This follows from \cite[Theorem 4.6]{shelton1997}. (iii) $\Rightarrow$ (iv): This is the content of Proposition \ref{prop:kosquad}. (iv) $\Rightarrow$ (i): This follows from Propositions \ref{prop:mingen} and \ref{prop:chordless}.
\end{proof}
\section{An infinite family}
\label{sec:eg}

 We prove Theorem \ref{thm:eg} below, which implies Theorem \ref{thm:intro2}. There are four classes of arrangements for which Question \ref{q:koszul} was previously answered:
\begin{enumerate}[(i)]
	\item Graphic arrangements
	\item Ideal arrangements
	\item Hypersolvable arrangements
	\item Ordered arrangements with disjoint minimal broken circuits.
\end{enumerate}
See \cite{hultman2016,jambu1998, schenck2002, vanle2013} for individual treatments. A priori it is unclear how these classes overlap with cones over Dirichlet arrangements.

Given a central arrangement $\AA$, let $M(\AA)$ be the usual matroid on $\AA$, so $X$ is independent in $M(\AA)$ if and only if the set of normal vectors of $X$ is linearly independent. For more on matroids and central arrangements, see \cite{stanley2007}.

\begin{mydef}
	Two central arrangements are \emph{combinatorially equivalent} if their underlying matroids are isomorphic.
\end{mydef}

\begin{mydef}
	Let $\chi(\g,\B)$ denote the chromatic number of the graph with vertex set $\B$ and an edge between $i$ and $j$ if and only if there is a crossing in $\g$ connecting $i$ and $j$.
	\label{mydef:omega}
\end{mydef}

\begin{eg}
	Consider the graph $\g$ on the left side of Figure \ref{fig:chrom} with $\B$ marked in white. On the right side is the graph with vertex set $\B$ and an edge between $i$ and $j$ if and only if there is a crossing in $\g$ connecting $i$ and $j$. This graph can be colored using 6 colors, as pictured, and no fewer, since it contains a clique on 6 vertices. Hence $\chi(\g,\B)=6$.

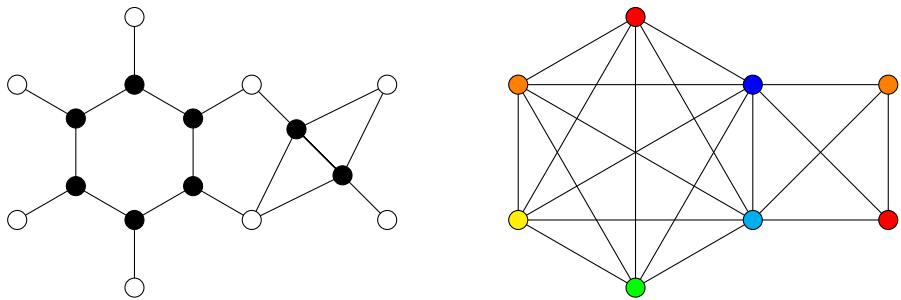
\begin{figure}
	\centering
	\begin{tikzpicture}[scale=1.8]
\def\c{0.5}
\foreach \s in {1,...,6}
{
	\coordinate (a\s) at ({60*\s+30}:1);
	\coordinate (b\s) at ({60*\s+30}:\c);
	\draw (a\s) -- (b\s);
};
\draw (b1) -- (b2) -- (b3) -- (b4) -- (b5) -- (b6) -- (b1);

\coordinate (a7) at ($(a5) + (1,0)$);
\coordinate (a8) at ($(a6) + (1,0)$);
\coordinate (b7) at ($0.33*(a6)+0.67*(a7)$);
\coordinate (b8) at ($0.67*(a6)+0.33*(a7)$);
\draw (a6) -- (b7) -- (b8) -- (a7);
\draw (b7) -- (a8) -- (b8) -- (a5) -- (b7);

\foreach \s in {1,...,8}
{
	\draw[fill=white] (a\s) circle (2pt);
	\draw[fill=black] (b\s) circle (2pt);
};

\foreach \s in {1,...,8}
{
	\coordinate (c\s) at ($(a\s)+(3.7,0)$);
};
\draw (c1) -- (c2);
\draw (c1) -- (c3);
\draw (c1) -- (c4);
\draw (c1) -- (c5);
\draw (c1) -- (c6);
\draw (c2) -- (c3);
\draw (c2) -- (c4);
\draw (c2) -- (c5);
\draw (c2) -- (c6);
\draw (c3) -- (c4);
\draw (c3) -- (c5);
\draw (c3) -- (c6);
\draw (c4) -- (c5);
\draw (c4) -- (c6);
\draw (c5) -- (c6);
\draw (c5) -- (c7);
\draw (c5) -- (c8);
\draw (c6) -- (c7);
\draw (c6) -- (c8);
\draw (c7) -- (c8);
\draw[fill=red] (c1) circle (2pt);
\draw[fill=orange] (c2) circle (2pt);
\draw[fill=yellow] (c3) circle (2pt);
\draw[fill=green] (c4) circle (2pt);
\draw[fill=cyan] (c5) circle (2pt);
\draw[fill=blue] (c6) circle (2pt);
\draw[fill=red] (c7) circle (2pt);
\draw[fill=orange] (c8) circle (2pt);
\end{tikzpicture}
	\caption{A graph with boundary nodes marked in white and an illustration of the associated number $\chi(\g,\B)$.}
	\label{fig:chrom}
\end{figure}
\end{eg}

\begin{thm}
	Suppose that $|E|\geq 240$ and $\chi(\g,\B)\geq 4$, and that some vertex of $\g$ is adjacent to at least 3 boundary nodes. If $\g\setminus \B$ contains the wheel graph on 5 vertices as an induced subgraph, then $\AA(\g,\u)$ is not combinatorially equivalent to any arrangement for which Question \ref{q:koszul} was previously answered.
	\label{thm:eg}
\end{thm}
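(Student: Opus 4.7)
The plan is to exhibit, for each of the four classes (i)--(iv), a matroid-invariant obstruction in $M(\AA(\g,\u))$ coming from one of the three hypotheses. Roughly, the hypothesis on three boundary neighbors handles (i); the chromatic hypothesis $\chi(\g,\B) \geq 4$ handles (iii); the induced wheel on $5$ vertices handles (iv); and (ii) is eliminated partly by $|E| \geq 240$ (which exceeds the size of every exceptional Weyl ideal arrangement, with $E_8$ maximal at $240$ positive roots) and partly by the obstructions used for the classical types.

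For (i), the vertex $v \in V \setminus \B$ adjacent to three boundary nodes $j_1, j_2, j_3$ contributes three hyperplanes $x_v = \u(j_k) x_0$ of $\AA(\g,\u)$, pairwise meeting only in the codimension-$2$ subspace $\{x_v = x_0 = 0\}$ since $\u$ is injective; together with the cone hyperplane $x_0 = 0$, these four elements span a common rank-$2$ flat with pairwise-independent normals, hence a $U_{2,4}$ sub-matroid of $M(\AA(\g,\u))$. Because $U_{2,4}$ is the excluded minor for graphic matroids, (i) is ruled out, and the same argument eliminates type-$A_n$ ideal arrangements (whose matroids are graphic). Ideal arrangements of types $B_n, C_n, D_n$ need a finer invariant — for instance, counting rank-$2$ flats of each multiplicity through a distinguished atom, or comparing the characteristic polynomial to the known form for signed-graphic matroids — to exclude the possibility of a Dirichlet cone realizing such a matroid.

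For (iv), I expect to argue that the rigid cycle structure of the induced wheel on $5$ vertices, consisting of four overlapping triangles sharing a common center plus a rim $4$-cycle, forces, under every total ordering of $\eo$, some pair of minimal broken circuits to meet: each spoke lies in two triangles, and the rim cycle provides additional circuits whose broken circuits cannot simultaneously be made disjoint from the triangle broken circuits. A short case analysis on which edge is minimum in each triangle, combined with the rim constraint, should exhaust all orderings. For (iii), the chromatic bound $\chi(\g,\B) \geq 4$ translates, via Definition \ref{mydef:omega}, into a constellation of crossings through the cone atom $\eh$ — namely a configuration of rank-$2$ flats of size $\geq 3$ meeting only at $\eh$ — that no iterated singleton fibration in the sense of Jambu-Papadima can resolve; the key point is that hypersolvability would force a compatible ``linear'' structure on these flats that the non-$3$-colorability of the crossing graph obstructs.

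The main obstacle will be (iii): a clean matroid-invariant formulation of hypersolvability requires careful bookkeeping of the fibration structure and an extraction of the correct combinatorial invariant from $\chi(\g,\B) \geq 4$. Finally, to derive Theorem \ref{thm:intro2}, I observe that the hypotheses of Theorem \ref{thm:eg} are stable under augmenting $\g$ with additional non-boundary edges or vertices, yielding infinitely many pairs $(\g,\u)$ whose matroids $M(\AA(\g,\u))$ are pairwise non-isomorphic, distinguished at minimum by their element count $|E|+1$.
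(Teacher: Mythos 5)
Your proposal assigns each hypothesis to a class differently than the paper does, and the mis-assignment leaves a gap that cannot be closed. You use the vertex with three boundary neighbors to rule out graphic arrangements (via a $U_{2,4}$ rank-$2$ flat), $\chi(\g,\B)\geq 4$ to rule out hypersolvable arrangements, and the induced $W_5$ to rule out the disjoint-minimal-broken-circuit class. The paper instead uses $\chi(\g,\B)\geq 4$ to rule out \emph{both} graphic and classical ideal arrangements, the induced $W_5$ to rule out hypersolvable arrangements, and the three-boundary-neighbor vertex to rule out disjoint minimal broken circuits.

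The fatal problem is your treatment of (iii). You propose that $\chi(\g,\B)\geq 4$ obstructs hypersolvability, but it does not: a vertex adjacent to four boundary nodes already gives $\chi(\g,\B)\geq 4$, yet the resulting cone can have rank $2$ (hence be supersolvable, hence hypersolvable). High crossing chromatic number simply is not a hypersolvability obstruction. The paper's actual hypersolvability obstruction is the induced $W_5$: one checks by hand that $W_5$ is not a hypersolvable \emph{graph} (Example 4.7), that the hypersolvable-graph property is hereditary for induced subgraphs (Proposition 4.8), and that hypersolvability of $\AA(\g,\u)$ forces hypersolvability of $\mg$ (Proposition 4.9). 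You have reassigned $W_5$ to the broken-circuit class, which leaves (iii) without a valid argument.

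Separately, your $U_{2,4}$ argument for (i) is correct but is essentially a binarity obstruction (excluding $\GF(2)$-representability), which, as you yourself notice, does not exclude ideal arrangements of type $B_n$, $C_n$, $D_n$: the arrangement $\BB_n$ is ternary-representable and contains $U_{2,4}$ flats. The paper's one-shot argument is a \emph{ternary} obstruction: since $\chi(\g,\B)\geq 4$, the matroid $M(\AA(\g,\u))$ is not representable over $\GF(3)$ (by a cited theorem), while $M(\BB_n)$ and all its restrictions are. This simultaneously kills graphic arrangements and all classical ideal arrangements, with $|E|\geq 240$ handling the exceptional types by a simple size bound. Your three-boundary-neighbor construction only yields four concurrent hyperplanes; to rule out $\GF(3)$-representability directly you would need five, which the hypothesis does not supply. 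So the hypothesis $\chi(\g,\B)\geq 4$ must be spent on the representability argument, leaving $W_5$ free to do the hypersolvability work and the three-boundary-neighbor vertex to do the broken-circuit work — exactly the paper's bookkeeping.

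Finally, your sketch for (iv) via $W_5$ is unfinished (and, as explained above, $W_5$ is better spent elsewhere). The paper's argument for (iv) is a two-line computation: the three edges from $v$ to $j_1 < j_2 < j_3$, together with the cone element, give circuits $\{e_0, e_1, e_3\}$ and $\{e_0, e_2, e_3\}$ whose minimal broken circuits both contain $e_3$ in any ordering. This is much shorter than a case analysis on orderings of a wheel.
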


\begin{eg}
	Recall that the \emph{join} $G+H$ of 2 graphs $G$ and $H$ is the disjoint union of $G$ and $H$ with edges added between every vertex of $G$ and every vertex of $H$. The join of any finite number of  graphs is defined by induction. Let $\overline{K}_n$ and $K_n$ be the edgeless and complete graphs, resp., on $n$ vertices. Let $W_5$ be the wheel graph on 5 vertices. The graph $\g=\overline{K}_4+K_{14}+W_5$ with boundary $\B=\overline{K}_4$ satisfies the hypothesis of Theorem \ref{thm:eg} and does so with the minimum possible number of vertices. In particular we have $|E| = 245$, $\chi(\g,\B)=4$, and $|V|=23$.
\end{eg}

The proof of Theorem \ref{thm:eg} can be found at the end of the section. First we need some preliminary results on the classes of arrangements (ii)--(iv).

\subsection{Ideal arrangements}

Let $\Phi\subseteq \KK^n$ be a finite root system with set of positive roots $\Phi^+$. A standard reference for root systems is \cite{humphreys1972}. The \emph{Coxeter arrangement} associated to $\Phi$ is the set of normal hyperplanes of $\Phi^+$. Every Coxeter arrangement associated to a classical root system $\mathsf{A}_n$, $\mathsf{B}_n$, $\mathsf{C}_n$ or $\mathsf{D}_n$ is a subset of an arrangement of the following type.

\begin{mydef}
	For all $n\geq 2$ let $\BB_n$ be the arrangement in $\KK^n$ of hyperplanes
		\[\{x_i = x_j : 1\leq i <j\leq n\}\cup\{x_i + x_j = 0 : 1\leq i<j\leq n\} \cup \{x_i = 0 : 1\leq i\leq n\}.\]
	\label{def:abd}
\end{mydef}

\begin{prop}
	If $\chi(\g,\B)\geq4$ and $|E|\geq 240$, then $\AA(\g,\u)$ is not combinatorially equivalent to any subarrangement of any Coxeter arrangement.
	
	\begin{proof}
		The matroids $M(\BB_n)$ are representable over any field $|\KK|$ with $|\KK|\geq 3$. However $M(\AA(\g,\u))$ is not representable over $\KK$ if $|\KK|<\chi(\g,\B)$ by \cite[Theorem 1.1(ii)]{lutz2018mat}. Hence if $\chi(\g,\B)\geq4$, then $\AA(\g,\u)$ is not combinatorially equivalent to any subarrangement of $\BB_n$.
		
		The exceptional root systems $\mathsf{E}_6$, $\mathsf{E}_7$, $\mathsf{E}_8$, $\mathsf{F}_4$ and $\mathsf{G}_2$ all have 240 or fewer elements. Hence no subarrangement of the associated Coxeter arrangements can have more than 240 elements. The result now follows from the classification of finite root systems.
	\end{proof}
	\label{prop:root}
\end{prop}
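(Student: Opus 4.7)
The plan is to use the classification of finite irreducible root systems and apply two different matroid invariants to finish: ground-set size for the exceptionals $\mathsf{E}_6, \mathsf{E}_7, \mathsf{E}_8, \mathsf{F}_4, \mathsf{G}_2$ and representability over $\mathbb{F}_3$ for the classical families $\mathsf{A}_n, \mathsf{B}_n, \mathsf{C}_n, \mathsf{D}_n$. The two hypotheses $|E|\geq 240$ and $\chi(\g,\B)\geq 4$ correspond exactly to these two invariants, which is what makes the split clean.

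For the exceptional types I would use the known root counts: since $\mathsf{E}_8$ is the largest exceptional root system (240 roots, 120 positive), every subarrangement of every exceptional Coxeter arrangement has at most 120 hyperplanes. The hypothesis $|E|\geq 240$ gives $|\AA(\g,\u)| = |E|+1 \geq 241$, and since matroid isomorphism preserves the size of the ground set, this rules out the exceptional case. For the classical types, I would first verify directly from Definition \ref{def:abd} that every classical Coxeter arrangement embeds as a subarrangement of $\BB_n$ for suitable $n$: the hyperplanes defining $\mathsf{A}_{n-1}$, $\mathsf{D}_n$, and $\mathsf{B}_n=\mathsf{C}_n$ all appear in the defining list of $\BB_n$. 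So it suffices to rule out isomorphism of $M(\AA(\g,\u))$ with a submatroid of $M(\BB_n)$. The natural $\pm 1$ integer realization of $\BB_n$ reduces over any field of characteristic $\neq 2$, making $M(\BB_n)$, and hence every submatroid, representable over $\mathbb{F}_3$. But by \cite[Theorem 1.1(ii)]{lutz2018mat}, the hypothesis $\chi(\g,\B)\geq 4 > |\mathbb{F}_3|$ forces $M(\AA(\g,\u))$ to be non-$\mathbb{F}_3$-representable, contradicting any such isomorphism.

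The step requiring the most care is the $\mathbb{F}_3$-representability of $M(\BB_n)$, since one must confirm that reducing the $\pm 1$ integer representation modulo 3 introduces no spurious linear dependencies among the normal vectors. This is classical for type-$\mathsf{B}$ Coxeter matroids — they are signed-graphic matroids of the complete signed graph and hence representable over every field of characteristic not 2 — so in practice this would be a citation rather than a substantive obstacle. The only other potential subtlety is making sure the classification argument actually covers reducible Coxeter arrangements, but these are products of irreducible pieces and so their matroids are direct sums of classical and exceptional pieces, which the same two invariants continue to obstruct.
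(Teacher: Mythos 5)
Your proposal is correct and takes essentially the same approach as the paper: size of the ground set rules out the exceptional types, and $\mathbb{F}_3$-representability of $M(\BB_n)$ (into which all classical Coxeter arrangements embed) combined with the non-representability of $M(\AA(\g,\u))$ over small fields rules out the classical types. The only differences are cosmetic — you use the sharper bound of $120$ positive roots for $\mathsf{E}_8$ where the paper uses the total root count $240$, and you flag the $\mathbb{F}_3$-representability of $\BB_n$ and the reducible case more explicitly — but the structure and key citations are identical.
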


An \emph{ideal arrangement} (or a \emph{root ideal arrangement}) is a certain subarrangement of a Coxeter arrangement (see \cite{abe2014, hultman2016}). Graphic arrangements are subarrangements of $\BB_n$. Thus we have the following.

\begin{cor}
	If $\chi(\g,\B)\geq 4$ and $|E|\geq 240$, then $\AA(\g,\u)$ is not combinatorially equivalent to any ideal arrangement or graphic arrangement.
	\label{cor:root}
\end{cor}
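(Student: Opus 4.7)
The plan is to derive Corollary \ref{cor:root} as an immediate consequence of Proposition \ref{prop:root}, doing essentially no work beyond combining the two structural facts stated in the paragraph that precedes the corollary. Proposition \ref{prop:root} already excludes combinatorial equivalence between $\AA(\g,\u)$ and any subarrangement of any Coxeter arrangement under the hypotheses $\chi(\g,\B)\geq 4$ and $|E|\geq 240$, so the whole corollary will follow once the two arrangement classes in its conclusion are exhibited as subarrangements of Coxeter arrangements.

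First, I would recall that an ideal arrangement is, by definition, a subarrangement of the Coxeter arrangement associated to some finite root system, as stated in the paragraph preceding the corollary and explained in \cite{abe2014, hultman2016}. Hence Proposition \ref{prop:root} immediately rules out combinatorial equivalence with any ideal arrangement. Next I would use the fact, also stated in that same paragraph, that graphic arrangements are subarrangements of $\BB_n$: given any finite simple graph $\g'$ on $n$ vertices, its graphic arrangement consists of hyperplanes $\{x_i = x_j\}$, each of which appears in Definition \ref{def:abd} of $\BB_n$. Since $\BB_n$ is the Coxeter arrangement of type $\mathsf{B}_n$, this places graphic arrangements inside the class handled by Proposition \ref{prop:root}, and combinatorial equivalence with any graphic arrangement is ruled out as well.

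There is no real obstacle here; the corollary is a packaging statement whose mathematical substance lives entirely in Proposition \ref{prop:root}. The one point worth a sentence of attention is that the inclusion of graphic arrangements into $\BB_n$ is being used as a black box from the paragraph just above the corollary, so in a fully written proof I would either cite that remark directly or justify it in one line by comparing the defining hyperplanes of $\AA(\g')$ with those listed in Definition \ref{def:abd}.
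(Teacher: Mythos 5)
Your proposal is correct and matches the paper exactly: the corollary is deduced immediately from Proposition \ref{prop:root} together with the facts that ideal arrangements are by definition subarrangements of Coxeter arrangements and that graphic arrangements are subarrangements of $\BB_n$. No further argument is needed.
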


\subsection{Hypersolvable arrangements}

Let $\AA$ be a central arrangement, and let $X\subseteq Y\subseteq \AA$. The containment $X\subseteq Y$ is \emph{closed} if $X\neq Y$ and $\{a,b,c\}$ is independent for all distinct $a,b\in X$ and $c\in Y\setminus X$. The containment $X\subseteq Y$ is \emph{complete} if $X\neq Y$ and for any distinct $a,b\in Y\setminus X$ there is $\gamma\in X$ such that $\{a,b,\gamma\}$ is dependent.

If $X\subseteq Y$ is closed and complete, then the element $\gamma$ is uniquely determined by $a$ and $b$. Write $\gamma = f(a,b)$. The containment $X\subseteq Y$ is \emph{solvable} if it is closed and complete, and if for any distinct $a,b,c\in Y\setminus X$ with $f(a,b)$, $f(a,c)$ and $f(b,c)$ distinct, the set $\{f(a,b),f(a,c),f(b,c)\}$ is dependent.

An increasing sequence $X_1 \subseteq\cdots\subseteq  X_k = \AA$ is called a \emph{hypersolvable composition series} for $\AA$ if $|X_1|=1$ and each $X_i \subseteq X_{i+1}$ is solvable.

\begin{mydef}[{\cite[Definition 1.8]{jambu1998}}]
	The central arrangement $\AA$ is \emph{hypersolvable} if it admits a hypersolvable composition series.
	\label{def:hyp}
\end{mydef}

 There is an analog for graphs. Let $S\subseteq T\subseteq E$. We say that $S\subseteq T$ is \emph{solvable} if it satisfies the following conditions:
\begin{enumerate}[(a)]
	\item There is no 3-cycle in $\g$ with two edges from $S$ and one edge from $T\setminus S$
	\item Either $T\setminus S = e$ with neither endpoint of $e$ met by $S$, or there exist distinct vertices $v_1,\ldots,v_k,v$ met by $T$ with $v_1,\ldots,v_k$ met by $S$ such that
	\begin{enumerate}[(i)]
		\item $S$ contains a clique on $\{v_1,\ldots,v_k\}$, and
		\item $T\setminus S = \{vv_s \in E : s=1,\ldots,k\}$.
	\end{enumerate}
\end{enumerate}
An increasing sequence $S_1\subseteq \cdots \subseteq S_k = E$ is called a \emph{hypersolvable composition series} for $\g$ if $|S_1|=1$ and each $S_i\subseteq S_{i+1}$ is solvable

\begin{mydef}[{\cite[Definition 6.6]{papadima2002}}]
	The graph $\g$ is \emph{hypersolvable} if it admits a hypersolvable composition series.
	\label{mydef:hypgraph}
\end{mydef}

\begin{prop}
	If the graph $\g$ is hypersolvable, then so is any induced subgraph of $\g$.
	
	\begin{proof}
		Suppose that $S_1\subseteq \cdots \subseteq S_k$ is a hypersolvable composition series for $\g$, and let $\overline{\g}$ be an induced subgraph of $\g$ with edge set $\overline{E}\subseteq E$. By eliminating empty sets and trivial containments in the sequence $S_1\cap \overline{E}\subseteq \cdots \subseteq S_k\cap \overline{E}$ one obtains a hypersolvable composition series for $\overline{\g}$.
	\end{proof}
	\label{prop:induced}
\end{prop}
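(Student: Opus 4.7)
The plan is to take a hypersolvable composition series $S_1\subseteq\cdots\subseteq S_k=E$ for $\g$ and restrict it to $\overline{E}$, setting $T_i:=S_i\cap\overline{E}$. After discarding empty sets and collapsing repeated entries, I claim this yields a hypersolvable composition series for $\overline{\g}$. Two things need to be verified: the first surviving $T_j$ has size $1$, and each surviving strict containment $T_i\subsetneq T_{i+1}$ is solvable in the sense of conditions (a)--(b) of Definition \ref{mydef:hypgraph}.

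For the first point, if $j=1$ this is immediate from $|S_1|=1$. Otherwise, fix minimal $j$ with $T_j\neq\emptyset$ and analyze $S_{j-1}\subsetneq S_j$. In the single-edge case of (b), $|T_j|\leq 1$ holds trivially. In the clique-attachment case $S_j\setminus S_{j-1}=\{vv_s:s=1,\ldots,\ell\}$ with clique $\{v_1,\ldots,v_\ell\}\subseteq V(S_{j-1})$, the key observation is that if two of the $v_s$ belonged to $\overline{\g}$, then since $\overline{\g}$ is induced the edge between them would already lie in $T_{j-1}$, contradicting $T_{j-1}=\emptyset$. So at most one $v_s$ survives and we get $|T_j|\leq 1$.

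For solvability of a nontrivial step $T_i\subsetneq T_{i+1}$, I split on the shape of the original step $S_i\subsetneq S_{i+1}$. A surviving single-edge step passes immediately, since the unique new edge $e\in\overline{E}$ has its endpoints unmet by $T_i\subseteq S_i$. A surviving clique-attachment step with new vertex $v$ and clique $\{v_1,\ldots,v_\ell\}$ forces $v\in V(\overline{\g})$ and $m\geq 1$, where $m$ is the number of $v_s$ lying in $\overline{\g}$. When $m\geq 2$, the induced-subgraph hypothesis preserves the full clique on the surviving vertices inside $T_i$, so clause (b) applies verbatim. The main subtlety, and the step I expect to be the obstacle worth documenting, is the case $m=1$: if the surviving vertex $v_{i_1}$ is met by $T_i$ then clause (b) applies with a trivial one-vertex clique, but if not, the step must be reinterpreted as the single-edge case of (b), since both endpoints of $vv_{i_1}$ are then unmet by $T_i$. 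Condition (a) transfers immediately in all cases, since any offending $3$-cycle in $\overline{\g}$ would live in $\g$ with the same edge partition, contradicting the original solvability of $S_i\subseteq S_{i+1}$.
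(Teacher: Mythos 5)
Your approach---restricting the composition series to $\overline{E}$ and deleting empty sets and repeated entries---is precisely the paper's approach; the paper simply declares the remaining verification routine, whereas you carry it out. Your case analysis correctly isolates the two genuinely subtle points: that the first surviving set has size $1$, and that the degenerate $m=1$ clique-attachment step needs reinterpretation.

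One small caveat in the $m=1$ subcase: when $v_{i_1}$ is unmet by $T_i$, you assert that both endpoints of $vv_{i_1}$ are then unmet by $T_i$, which requires $v$ itself to be unmet by $T_i$. This follows if the vertex $v$ in a clique-attachment step is implicitly new relative to $S$---that is, unmet by $S_i$---so that $v$ is automatically unmet by $T_i\subseteq S_i$; this is the intended reading of Definition~\ref{mydef:hypgraph}. Read strictly literally, however, the definition does not forbid $v$ from already being met by $S_i$, in which case your sentence is not automatic. Even so the subcase is easily salvaged: if $v$ is met by $T_i$ while $v_{i_1}$ is not, apply clause (b) with $\{v\}$ as the one-vertex clique and $v_{i_1}$ playing the role of the new vertex. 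With that patch your argument is complete and matches the paper's.
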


The following proposition generalizes half a result of Papadima and Suciu \cite[Proposition 6.7]{papadima2002}, who showed that $\g$ is hypersolvable if and only if the associated graphic arrangement is hypersolvable.

\begin{prop}
	If $\AA(\g,\u)$ is hypersolvable, then the graph $\mg$, obtained from $\g$ by adding edges between every pair of boundary nodes, is hypersolvable.
	
	\begin{proof}
		Let $\me$ be the set of added edges, so that the edge set of $\mg$ is the disjoint union $E\cup \me$. Write $\B=\{v_1,\ldots,v_m\}$. For $i=1,\ldots,m-1$
		\[T_i = \{v_rv_s\in \me : r<s\leq i+1\},\]
		so for example $T_{m-1} = \me$.
		
		Suppose that $X_1\subseteq \cdots \subseteq X_k$ is a hypersolvable composition series for $\AA(\g,\u)$. For each $i$ let $S_i\subseteq \eo$ be the set corresponding to $X_i$. Let $j$ be the smallest index for which $\eh \in S_j$. Consider the increasing sequence
		\[
		S_1\subseteq \cdots \subseteq S_{j-1} \subseteq S_{j-1}\cup T_1 \subseteq \cdots \subseteq S_{j-1}\cup T_{m-1}\subseteq S_{j+1}\cup \me \subseteq \cdots \subseteq S_k\cup \me,
		\]
		omitting the initial portion $S_1\subseteq \cdots \subseteq S_{j-1}$ if $j=1$. It is routine to show that this sequence is a hypersolvable composition series for $\mg$.
	\end{proof}
\label{prop:hypersolvable}
\end{prop}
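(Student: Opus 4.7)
The plan is to follow the author's blueprint and verify that the proposed sequence is a hypersolvable composition series for $\mg$ in the sense of Definition~\ref{mydef:hypgraph}. Let $X_1 \subseteq \cdots \subseteq X_k$ be a hypersolvable composition series for $\AA(\g,\u)$, let $S_i \subseteq \eo$ be the corresponding subset, and let $j$ be the smallest index with $\eh \in S_j$. With $\B = \{v_1,\ldots,v_m\}$ and $T_i = \{v_r v_s \in \me : r < s \le i+1\}$, the set $T_i$ is the edge set of the complete graph on $\{v_1,\ldots,v_{i+1}\}$, and each $T_{i+1}\setminus T_i$ is a star centered at $v_{i+2}$. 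Writing $S_i^\circ = S_i \setminus \eh$, the candidate composition series for $\mg$ is
\[
S_1 \subseteq \cdots \subseteq S_{j-1} \subseteq S_{j-1} \cup T_1 \subseteq \cdots \subseteq S_{j-1} \cup T_{m-1} \subseteq S_{j+1}^\circ \cup \me \subseteq \cdots \subseteq S_k^\circ \cup \me,
\]
after discarding any equalities.

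Verification splits into three families of steps. For the pre-$\eh$ steps $S_i \subseteq S_{i+1}$ with $i < j-1$, each $S_i \subseteq E$, so any $3$-cycle in $\mg$ on these edges is a triangle of $\g$, and graph-condition~(a) follows directly from the closed condition for $X_i \subseteq X_{i+1}$ in the arrangement. For graph-condition~(b), the relevant $3$-element witnesses furnished by the complete condition are the type-(B) triangles and type-(C) Y-shapes of Proposition~\ref{prop:circuits}, whose edges share vertices pairwise; the $f$ condition then forces the single common vertex $v$ and clique on the remaining endpoints required by Definition~\ref{mydef:hypgraph}, paralleling Papadima--Suciu's analysis in the graphic case. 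For the boundary-buildup steps $S_{j-1}\cup T_i \subseteq S_{j-1}\cup T_{i+1}$, condition~(b) is immediate with $v = v_{i+2}$ and clique on $\{v_1,\ldots,v_{i+1}\}$ supplied by $T_i$; condition~(a) reduces to ruling out any $w \in V\setminus\B$ with both $v_r w, v_{i+2} w \in S_{j-1}$, since such a $w$ would exhibit $\{v_r w, v_{i+2} w\}$ as a size-$2$ crossing in $X_{j-1}$, violating the closed condition for the arrangement step $X_{j-1}\subseteq X_j$ at which $\eh$ is introduced. Finally, for the post-$\eh$ steps $S_i^\circ \cup \me \subseteq S_{i+1}^\circ \cup \me$ with $i\ge j$, the added edges equal $S_{i+1}\setminus S_i \subseteq E$, and arrangement-solvability transfers to graph-solvability under the substitution $\gamma = \eh \leftrightarrow \gamma' = v_r v_s \in \me$: a type-(A) witness $\{a,b,\eh\}$ corresponds to $\{a,b\}$ being a size-$2$ crossing through some $w \in V\setminus\B$ with endpoints $v_r, v_s \in \B$, and the graph-side connector is simply the boundary edge $v_r v_s \in \me$.

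The main obstacle is the structural upgrade in graph-condition~(b) from the pairwise dependencies provided by the arrangement-side complete and $f$ conditions to the global concurrency-plus-clique structure demanded by Definition~\ref{mydef:hypgraph}. The enabling combinatorial fact is that each $3$-element circuit of Proposition~\ref{prop:circuits}—once $\eh$ is traded for the appropriate element of $\me$—has the property that any two of its three edges share an endpoint; once this observation is in hand, the verifications become mechanical case checks that unpack Definition~\ref{mydef:hypgraph} against the arrangement-side conditions, justifying the author's claim that the check is routine.
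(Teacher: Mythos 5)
Your proposal reproduces the paper's construction exactly: the same candidate sequence built by inserting the boundary-clique buildup $T_1\subseteq\cdots\subseteq T_{m-1}$ at the index $j$ where $\eh$ first appears and appending $\me$ to the remaining terms. Where the paper declares the verification "routine," you usefully fill in the three families of checks and make the small but necessary notational correction of stripping $\eh$ from the later terms (writing $S_i^\circ\cup\me$ rather than the paper's literal $S_i\cup\me$, which as written would contain $\eh\notin E\cup\me$); the key observation you isolate — that every $3$-element circuit, with $\eh$ traded for the corresponding element of $\me$, has pairwise-intersecting edges — is indeed what makes the transfer from arrangement-solvability to graph-solvability go through.
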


\begin{eg}
	Consider the network $N$ on the left side of Figure \ref{fig:nothyp}. Here $\mg=W_5$ is the wheel graph on 5 vertices. An exhaustive argument shows that $W_5$ is not hypersolvable. Hence Proposition \ref{prop:hypersolvable} implies that $\AA(\g,\u)$ is not hypersolvable.
	\label{eg:w5}
\end{eg}

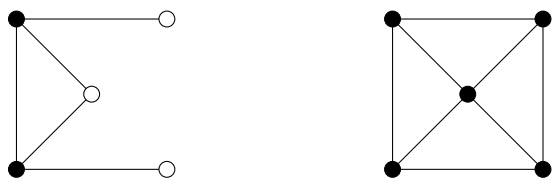
\begin{figure}[ht]
		\begin{tikzpicture}
	\def\c{1.4}
	\coordinate (a1) at (-1,-1);
	\coordinate (a2) at (1,-1);
	\coordinate (a3) at (1,1);
	\coordinate (a4) at (-1,1);
	\coordinate (a5) at (0,0);
	\draw (a1) -- (a2) -- (a3) -- (a4) -- (a1);
	\draw (a1) -- (a3);
	\draw (a2) -- (a4);
	\draw[fill=black] (a1) circle (3pt);
	\draw[fill=black] (a2) circle (3pt);
	\draw[fill=black] (a3) circle (3pt);
	\draw[fill=black] (a4) circle (3pt);
	\draw[fill=black] (a5) circle (3pt);
	
	\def\a{-5}
	\coordinate (b1) at (-1+\a,-1);
	\coordinate (b2) at (1+\a,-1);
	\coordinate (b3) at (1+\a,1);
	\coordinate (b4) at (-1+\a,1);
	\coordinate (b5) at (\a,0);
	\draw (b3) -- (b4) -- (b1) -- (b2);
	\draw (b1) -- (b5) -- (b4);
	\draw[fill=black] (b1) circle (3pt);
	\draw[fill=white] (b2) circle (3pt);
	\draw[fill=white] (b3) circle (3pt);
	\draw[fill=black] (b4) circle (3pt);
	\draw[fill=white] (b5) circle (3pt);
	\end{tikzpicture}
	\caption{Left-to-right: a network $N$ with boundary nodes marked in white and the associated graph $\mg=W_5$.}
	\label{fig:nothyp}
\end{figure}

\begin{q}
Does the converse of Proposition \ref{prop:hypersolvable} hold? In other words, is $\AA(\g,\u)$ hypersolvable whenever $\mg$ is hypersolvable?
\end{q}

\subsection{Disjoint broken circuits}

Fix an ordering of a central arrangement $\AA$, and let $\min X$ denote the minimal element of any $X\subseteq \AA$. The \emph{broken circuits} of $\AA$ are the sets $C\setminus \min C$ for all circuits $C$ of $\AA$. A broken circuit is \emph{minimal} if it does not properly contain any broken circuits. Van Le and R\"{o}mer \cite[Theorem 4.9]{vanle2013} answered Question \ref{q:koszul} affirmatively for all ordered arrangements with disjoint minimal broken circuits. No matter the ordering, many Dirichlet arrangements do not satisfy this requirement, as the following proposition implies.

\begin{prop}
	If there is an element of $V\setminus \B$ adjacent to at least 3 boundary nodes, then the minimal broken circuits of $\AA(\g,\u)$ are not disjoint with respect to any ordering.
	
	\begin{proof}
		Suppose that $i\in V\setminus \B$ is adjacent to distinct boundary nodes $j_1$, $j_2$ and $j_3$. Let $e_r$ be the edge $ij_r$ for $r=1,2,3$. Fix an ordering of $\AA(\g,\u)$ and suppose without loss of generality that $e_1<e_2<e_3$. We obtain circuits $\{e_0,e_1,e_3\}$ and $\{e_0,e_2,e_3\}$. The associated broken circuits are minimal, since there are no circuits of size $\leq 2$. Moreover both broken circuits contain $e_3$.
	\end{proof}
\label{prop:dmbc}
\end{prop}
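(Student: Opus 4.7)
The plan is to exhibit, for every ordering of $\eo = E \cup \{\eh\}$, two distinct minimal broken circuits of $\AA(\g,\u)$ that share an element. The hypothesis supplies an interior vertex $i \in V \setminus \B$ adjacent to three boundary nodes $j_1, j_2, j_3$, and I set $e_r = ij_r$ for $r=1,2,3$. Because $i \notin \B$, each two-edge walk $j_r \to i \to j_s$ is a minimal path between distinct boundary nodes, i.e., a crossing, so Proposition \ref{prop:circuits}(A) makes each $\{e_r, e_s, \eh\}$ a circuit of $\eo$.

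Given any ordering of $\eo$, I would relabel the three edges so that $e_1 < e_2 < e_3$ within the triple, and then focus on the two circuits $C_1 = \{e_1, e_3, \eh\}$ and $C_2 = \{e_2, e_3, \eh\}$. The key observation is that $e_3$ strictly exceeds $e_1$ and $e_2$, so it is not the minimum of either $C_1$ or $C_2$ no matter where $\eh$ falls in the order. Consequently $e_3$ belongs to both broken circuits $C_1 \setminus \min C_1$ and $C_2 \setminus \min C_2$, uniformly across the four placements of $\eh$ relative to $e_1, e_2, e_3$.

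Minimality of the two broken circuits is then immediate once we know $\AA(\g,\u)$ has no circuit of size $\leq 2$: a size-$2$ broken circuit can fail to be minimal only by containing a broken circuit arising from a circuit of size $1$ or $2$. No such circuit exists, as one sees from a quick inspection of the defining equations in \eqref{eq:intarr}: distinct edges of $\g$ yield non-parallel hyperplanes (no repeated edges), boundary hyperplanes sharing a common interior vertex are separated by the injectivity of $\u$, and after coning $\{x_0 = 0\}$ is parallel to none of them. The two broken circuits are therefore minimal and share $e_3$, settling the proposition.

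I do not foresee any real obstacle in carrying this out; the only tactical choice worth flagging is to designate $e_3$, rather than $\eh$, as the shared element, since then the argument depends only on the relative order of $e_1, e_2, e_3$ and sidesteps any case analysis on the placement of $\eh$.
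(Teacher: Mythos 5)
Your argument coincides with the paper's: exhibit the circuits $\{e_1,e_3,\eh\}$ and $\{e_2,e_3,\eh\}$ arising from the crossings $\{e_1,e_3\}$ and $\{e_2,e_3\}$ through the interior vertex $i$, observe that no circuit has size $\leq 2$ so the resulting broken circuits are minimal, and note both contain $e_3$. Your justification that no size-$2$ circuits exist (distinct edges, injectivity of $\u$, coning) is a welcome amplification of the paper's terse ``since there are no circuits of size $\leq 2$.''

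One small point worth flagging in both your write-up and the paper's: when $\eh$ exceeds both $e_1$ and $e_2$ in the ordering, the two broken circuits $C_1\setminus\min C_1$ and $C_2\setminus\min C_2$ coincide (both equal $\{e_3,\eh\}$), so one doesn't yet have two \emph{distinct} minimal broken circuits that meet. The fix is immediate: bring in the third circuit $\{e_1,e_2,\eh\}$, whose broken circuit $\{e_2,\eh\}$ is distinct and shares $\eh$. Among the three circuits on $\{e_1,e_2,e_3,\eh\}$ one always finds two distinct broken circuits with nonempty intersection, regardless of where $\eh$ lands.
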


\begin{proof}[Proof of Theorem \ref{thm:eg}]
	Since $\chi(\g,\B)\geq 4$ and $|E|\geq 240$, Corollary \ref{cor:root} says that $\AA(\g,\u)$ is not combinatorially equivalent to any ideal arrangement or graphic arrangement. Since $\g\setminus \B$ contains $W_5$ as an induced subgraph, $\mg$ also contains $W_5$ as an induced subgraph. Example \ref{eg:w5} and Propositions \ref{prop:induced} and \ref{prop:hypersolvable} imply that $\AA(\g,\u)$ is not hypersolvable, a property depending only on $M(\AA(\g,\u))$. Finally Proposition \ref{prop:dmbc} says that the broken circuits of $\AA(\g,\u)$ are not disjoint with respect to any ordering. This property only depends on $M(\AA(\g,\u))$, so the result follows.
\end{proof}

\section*{Acknowledgments}

\noindent The author thanks Trevor Hyde and the anonymous referee for helpful comments.


\bibliographystyle{abbrv}
\bibliography{lutz-kasfka}

\end{document}